%% LyX 2.0.5.1 created this file.  For more info, see http://www.lyx.org/.
%% Do not edit unless you really know what you are doing.
\documentclass[oneside,english]{amsart}
\usepackage[T1]{fontenc}
\usepackage[latin9]{inputenc}
\usepackage{color}
\usepackage{textcomp}
\usepackage{amsthm}
\usepackage{amssymb}
\usepackage[all]{xy}
\PassOptionsToPackage{normalem}{ulem}
\usepackage{ulem}

\makeatletter
%%%%%%%%%%%%%%%%%%%%%%%%%%%%%% Textclass specific LaTeX commands.
\numberwithin{equation}{section}
\numberwithin{figure}{section}
\theoremstyle{plain}
\newtheorem{thm}{\protect\theoremname}[section]
  \theoremstyle{plain}
  \newtheorem{fact}[thm]{\protect\factname}
  \theoremstyle{definition}
  \newtheorem{example}[thm]{\protect\examplename}
  \theoremstyle{definition}
  \newtheorem{defn}[thm]{\protect\definitionname}
  \theoremstyle{plain}
  \newtheorem{lem}[thm]{\protect\lemmaname}
  \theoremstyle{definition}
  \newtheorem{problem}[thm]{\protect\problemname}
  \theoremstyle{plain}
  \newtheorem{cor}[thm]{\protect\corollaryname}
  \theoremstyle{plain}
  \newtheorem{prop}[thm]{\protect\propositionname}
  \theoremstyle{remark}
  \newtheorem{rem}[thm]{\protect\remarkname}
  \theoremstyle{plain}
  \newtheorem{conjecture}[thm]{\protect\conjecturename}
\newenvironment{lyxlist}[1]
{\begin{list}{}
{\settowidth{\labelwidth}{#1}
 \setlength{\leftmargin}{\labelwidth}
 \addtolength{\leftmargin}{\labelsep}
 }}
{\end{list}}

%%%%%%%%%%%%%%%%%%%%%%%%%%%%%% User specified LaTeX commands.
\usepackage[all]{xy}

\makeatother

\usepackage{babel}
  \providecommand{\conjecturename}{Conjecture}
  \providecommand{\corollaryname}{Corollary}
  \providecommand{\definitionname}{Definition}
  \providecommand{\examplename}{Example}
  \providecommand{\factname}{Fact}
  \providecommand{\lemmaname}{Lemma}
  \providecommand{\problemname}{Problem}
  \providecommand{\propositionname}{Proposition}
  \providecommand{\remarkname}{Remark}
\providecommand{\theoremname}{Theorem}

\begin{document}
\def\Ind#1#2{#1\setbox0=\hbox{$#1x$}\kern\wd0\hbox to 0pt{\hss$#1\mid$\hss}
\lower.9\ht0\hbox to 0pt{\hss$#1\smile$\hss}\kern\wd0}
\def\Notind#1#2{#1\setbox0=\hbox{$#1x$}\kern\wd0\hbox to 0pt{\mathchardef
\nn="3236\hss$#1\nn$\kern1.4\wd0\hss}\hbox to 0pt{\hss$#1\mid$\hss}\lower.9\ht0
\hbox to 0pt{\hss$#1\smile$\hss}\kern\wd0}
\def\indi{\mathop{\mathpalette\Ind{}}}
\def\nindi{\mathop{\mathpalette\Notind{}}}
\def\bdd {bdd}

\global\long\def\acl{\operatorname{acl}}

\global\long\def\id{\operatorname{id}}

\global\long\def\dcl{\operatorname{dcl}}

\global\long\def\Avg{\operatorname{Avg}}

\global\long\def\Gal{\operatorname{Gal}}

\global\long\def\inp{\operatorname{inp}}

\global\long\def\sep{\operatorname{sep}}

\global\long\def\ind{\operatorname{\indi}}

\global\long\def\indisc{\operatorname{EM}}

\global\long\def\nind{\operatorname{\nindi}}

\global\long\def\ist{\operatorname{ist}}

\global\long\def\Aut{\operatorname{Aut}}

\global\long\def\M{\operatorname{\mathbb{M}}}

\global\long\def\NTP{\operatorname{NTP}}

\global\long\def\NIP{\operatorname{NIP}}

\global\long\def\TP{\operatorname{TP}}

\global\long\def\tp{\operatorname{tp}}

\global\long\def\transp{\operatorname{T}}

\global\long\def\lstp{\operatorname{L}}

\global\long\def\NSOP{\operatorname{NSOP}}

\global\long\def\PRC{\operatorname{PRC}}

\global\long\def\ACFA{\operatorname{ACFA}}

\global\long\def\PAC{\operatorname{PAC}}

\global\long\def\Sone{\operatorname{S1}}

\global\long\def\pPC{\operatorname{PpC}}

\global\long\def\bdn{\operatorname{bdn}}

\global\long\def\alg{\operatorname{alg}}

\global\long\def\cl{\operatorname{cl}}

\global\long\def\dprk{\operatorname{dp-rk}}

\global\long\def\fund{\operatorname{fund}}

\global\long\def\div{\operatorname{div}}

\global\long\def\card{\operatorname{\mbox{Card}^{*}}}

\global\long\def\Kk{\mathcal{K}}
\global\long\def\AS{\varrho}
\global\long\def\Ff{\mathbb{F}}
\global\long\def\Zz{\mathbb{Z}}

\global\long\def\C{\mathfrak{C}}
\global\long\def\Qq{\mathbb{Q}}

\title{Groups and fields with $\mbox{NTP}_{2}$ }

\author{Artem Chernikov, Itay Kaplan and Pierre Simon}
\begin{abstract}
$\NTP_{2}$ is a large class of first-order theories defined by Shelah
and generalizing simple and NIP theories. Algebraic examples of $\NTP_{2}$
structures are given by ultra-products of $p$-adics and certain valued
difference fields (such as a non-standard Frobenius automorphism living
on an algebraically closed valued field of characteristic 0). In this
note we present some results on groups and fields definable in $\NTP_{2}$
structures. Most importantly, we isolate a chain condition for definable
normal subgroups and use it to show that any $\NTP_{2}$ field has
only finitely many Artin-Schreier extensions. We also discuss a stronger
chain condition coming from imposing bounds on burden of the theory
(an appropriate analogue of weight), and show that every strongly
dependent valued field is Kaplansky.
\end{abstract}

\thanks{The first author was partially supported by the {[}European Community's{]}
Seventh Framework Programme {[}FP7/2007-2013{]} under grant agreement
n\textdegree{} 238381.}

\thanks{The second author was supported by SFB 878.}

\maketitle

\section{Introduction}

The class of $\NTP_{2}$ theories (i.e. theories without the tree
property of the second kind) was introduced by Shelah \cite{MR595012,MR1083551}.
It generalizes both simple and NIP theories and turns out to be a
good context for the study of forking and dividing, even if one is
only interested in NIP theories: in \cite{CheKap,Chernikov:2012uq,CheBY}
it is demonstrated that the theory of forking in simple theories \cite{MR2694252}
can be viewed as a special case of the theory of forking in $\NTP_{2}$
theories over an extension base.

What are the known algebraic examples of $\NTP_{2}$ theories?
\begin{fact}
\cite{Chernikov:2012uq}\label{fac: preserving NTP2}Let $\bar{K}=\left(K,\Gamma,k,v,\mbox{ac}\right)$
be a Henselian valued field of equicharacteristic $0$ in the Denef-Pas
language. Assume that $k$ is $\NTP_{2}$ (respectively, $\Gamma$
and $k$ are strong, of finite burden --- see Section \ref{sec: Strong theories and burden}).
Then $\bar{K}$ is $\NTP_{2}$ (respectively strong, of finite burden).\end{fact}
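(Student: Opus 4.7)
The plan is the Ax--Kochen--Ershov-style transfer via Pas's quantifier elimination. First, by Pas's QE (the theory of equicharacteristic-zero henselian valued fields with an angular component map eliminates valued-field quantifiers), every formula $\phi(x;y)$ with tuples $x,y$ from the valued field is equivalent to a Boolean combination of formulas of the form $\theta_{\mathrm{res}}\!\bigl(\mbox{ac}\,P_{1}(x,y),\ldots,\mbox{ac}\,P_{m}(x,y)\bigr)$ with $\theta_{\mathrm{res}}$ in the residue-field language and $P_{i}\in\Zz[x,y]$, and of formulas of the form $\theta_{\mathrm{val}}\!\bigl(v\,Q_{1}(x,y),\ldots,v\,Q_{n}(x,y)\bigr)$ with $\theta_{\mathrm{val}}$ in the value-group language and $Q_{j}\in\Zz[x,y]$. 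In particular, the sorts $k$ and $\Gamma$ are stably embedded as pure structures inside $\bar{K}$.

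To prove the $\NTP_{2}$ clause, suppose for contradiction that $\bar{K}$ has $\TP_{2}$, witnessed by $\phi$, a natural number $N$, and an array $(a_{i,j})_{i,j<\omega}$ with $N$-inconsistent rows and consistent paths. By Ramsey plus compactness, pass to a mutually indiscernible array. By QE and distributing the Boolean combination, a further sub-array reduction yields $\phi=\theta_{\mathrm{res}}\wedge\theta_{\mathrm{val}}$. The core combinatorial step is to show that after another extraction one of the two conjuncts alone still witnesses $\TP_{2}$: consistency of each path transfers to each conjunct by weakening; $N$-inconsistency of each row, given mutual indiscernibility, must commit to one of the two orthogonal sorts, and pigeonhole over the rows extracts a single sort-side carrying the pattern. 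Pushing the resulting sub-array through the definable functions $\mbox{ac}\,P$ or $v\,Q$ yields a $\TP_{2}$ array in $k$ or in $\Gamma$, contradicting the hypothesis.

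The strong and finite-burden clauses follow by running the same decomposition on an inp-pattern of length $\kappa$ in $\bar{K}$: after the Ramsey reductions it breaks into an inp-pattern of length $\kappa_{1}$ in $k$ and one of length $\kappa_{2}$ in $\Gamma$ with $\kappa\le\kappa_{1}+\kappa_{2}$, yielding $\bdn(\bar{K})\le\bdn(k)+\bdn(\Gamma)$ and hence preservation of both finite burden and strongness. The main obstacle is the ``commitment'' step: since a single valued-field variable $x$ appears in both $\mbox{ac}\,P(x,y)$ and $v\,Q(x,y)$, the conjuncts $\theta_{\mathrm{res}}$ and $\theta_{\mathrm{val}}$ are not a priori independent over $x$; one must exploit the orthogonality of $k$ and $\Gamma$ as pure sorts (a nontrivial consequence of Pas's QE together with the angular component) to rule out the possibility that the inconsistency is a genuinely mixed, valued-field phenomenon. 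This is what makes the proof non-trivial and is precisely where Pas's QE (rather than merely henselianity) is required.
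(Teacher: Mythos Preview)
This statement is recorded in the paper as a cited \emph{Fact} from \cite{Chernikov:2012uq}; the present paper gives no proof of its own, so there is nothing here to compare your sketch against directly.

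That said, your overall strategy --- Pas's relative quantifier elimination, stable embeddedness and orthogonality of $k$ and $\Gamma$, then splitting an $\inp$-pattern between the two sorts --- is indeed the shape of the argument in the cited reference. The step you yourself flag as the ``main obstacle'' is a genuine gap as written: from the fact that $\theta_{\mathrm{res}}$ and $\theta_{\mathrm{val}}$ each have consistent rows one cannot conclude that their conjunction does, because the field witnesses $x$ realizing the residue constraints and those realizing the value constraints need not coincide, and orthogonality of $k$ and $\Gamma$ as \emph{sorts} says nothing about the correlation of $\mbox{ac}\,P(x,y)$ and $v\,Q(x,y)$ for the \emph{same} field element $x$. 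So a soft ``pigeonhole over rows'' does not close the argument.

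In the cited proof this is handled by first reducing to a single field variable $\left|x\right|=1$ (via the sub-multiplicativity of burden, recorded here as Fact~\ref{fac: Burden-is-sub-multiplicative}) and then analysing the structure of $1$-types in the home sort more carefully, so that the residue and value data attached to $x$ can be decoupled in a controlled way. Your additive heuristic $\bdn(\bar K)\le\bdn(k)+\bdn(\Gamma)$ is the right order of magnitude for the conclusion, but the route to it is not the row-by-row commitment you describe.
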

\begin{example}
Let $\mathfrak{U}$ be a non-principal ultra-filter on the set of
prime numbers $P$. Then:
\begin{enumerate}
\item $\bar{K}=\prod_{p\in P}\mathbb{Q}_{p}/\mathfrak{U}$ is $\NTP_{2}$.
This follows from Fact \ref{fac: preserving NTP2} because:

\begin{itemize}
\item The residue field is pseudo-finite, so of burden $1$ (as burden is
bounded by weight in a simple theory by \cite{HansBurden}).
\item The value group is a $\mathbb{Z}$-group, thus dp-minimal, and burden
equals dp-rank in $\NIP$ theories by \cite{HansBurden}.
\end{itemize}

We remark that, while $\mathbb{Q}_{p}$ is dp-minimal for each $p$
by \cite{DolichGoodrickLippel}, the field $\bar{K}$ is neither simple
nor NIP even in the pure ring language (as the valuation ring is definable
by \cite{AxDefiningValuation}).

\item $\bar{K}=\prod_{p\in P}F_{p}\left(\left(t\right)\right)/\mathfrak{U}$
is $\NTP_{2}$, of finite burden, as it has the same theory as the
previous example by \cite{ax1965diophantine} (while each of $F_{p}\left(\left(t\right)\right)$
has $\TP_{2}$ by Corollary \ref{cor: F_p((t)) has TP2}).
\end{enumerate}
\end{example}
\begin{fact}
\cite{CheHils} Let $\bar{K}=\left(K,\Gamma,k,v,\mbox{ac},\sigma\right)$
be a $\sigma$-Henselian contractive valued difference field of equicharacteristic
$0$, i.e. $\sigma$ is an automorphism of the field $K$ such that
for all $x\in K$ with $v\left(x\right)>0$ we have $v\left(\sigma\left(x\right)\right)>n\cdot v\left(x\right)$
for all $n\in\omega$ (see \cite{MR2725200}). Assume that both $\left(K,\sigma\right)$
and $\left(\Gamma,\sigma\right)$, with the naturally induced automorphisms,
are $\NTP_{2}$. Then $\bar{K}$ is $\NTP_{2}$.\end{fact}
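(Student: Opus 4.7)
The plan is to follow the Ax--Kochen--Ershov strategy that works so well for the unary case (Fact \ref{fac: preserving NTP2}): first establish a relative quantifier elimination for $\sigma$-Henselian contractive valued difference fields of equicharacteristic $0$ down to the residue field $\left(k,\sigma\right)$ and the value group $\left(\Gamma,\sigma\right)$ in a Denef--Pas style language with an angular component map $\mbox{ac}$, and then use this to transfer $\NTP_{2}$. The contractive hypothesis $v\left(\sigma\left(x\right)\right)>n\cdot v\left(x\right)$ for all $n<\omega$ is precisely what makes Scanlon-type QE available: it forces the induced action on $\Gamma$ to be ``very expanding'', so the twisted $\sigma$-polynomials behave, in valuation, like ordinary polynomials in their leading $\sigma$-monomial, and $\sigma$-Henselianity lets one lift approximate roots. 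Given such a relative QE, every formula $\varphi\left(x,y\right)$ in $\bar{K}$ becomes equivalent, modulo a complete $1$-type in $\bar{K}$, to a Boolean combination of residue-field-valued and value-group-valued formulas applied to Pas terms in $x,y$.

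With relative QE in hand, I would use the standard array characterization: $\NTP_{2}$ is equivalent to saying that no formula admits an inp-pattern of infinite depth, equivalently that for every mutually indiscernible array $\left(\bar{b}_{i,j}\right)_{i,j<\omega}$ and every formula $\varphi\left(x,y\right)$, if each row is inconsistent then some path is consistent. Given a putative inp-pattern with rows $\left(\varphi_{\ell}\left(x,b_{\ell,j}\right)\right)_{j<\omega}$ in $\bar{K}$, I would pass to a mutually indiscernible array and apply QE to replace each $\varphi_{\ell}$ by a finite Boolean combination of residue-field and value-group formulas. Standard extraction arguments (as in \cite{Chernikov:2012uq}) reduce the problem to patterns whose ``coordinates'' live entirely in one of the two sorts, and one uses the assumption that $\left(k,\sigma\right)$ and $\left(\Gamma,\sigma\right)$ are $\NTP_{2}$ to conclude that each such pattern admits a consistent path; combining them by density of $\mbox{ac}$ and saturation produces a consistent path in $\bar{K}$, contradicting the assumption of inp-patterns.

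The main technical obstacle is the relative QE step in the presence of the automorphism. Classical Ax--Kochen--Ershov passes through pseudo-convergence and Hensel's lemma; in the difference-valued setting one must instead work with $\sigma$-pseudo-convergence and $\sigma$-Henselian lifting of $\sigma$-polynomials, and the contractive condition must be used repeatedly to dominate mixed terms $\sigma^{i_{1}}\left(x\right)^{n_{1}}\cdots\sigma^{i_{r}}\left(x\right)^{n_{r}}$ by their highest $\sigma$-iterate. A secondary issue is that the induced structure on $\left(\Gamma,\sigma\right)$ is that of an ordered abelian group with an expanding automorphism, and one has to be careful that the $\NTP_{2}$ assumption on $\left(\Gamma,\sigma\right)$ is used in the expanded language naturally induced from $\bar{K}$ rather than a weaker reduct. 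Once these points are handled, the transfer of $\NTP_{2}$ (and, with a little more bookkeeping of the number of mutually indiscernible rows one can fit, of finite burden) is essentially formal from the arguments in \cite{Chernikov:2012uq}.
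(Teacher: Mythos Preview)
The paper does not prove this statement: it is recorded as a \emph{Fact} with a citation to \cite{CheHils} and is used only as background in the introduction, so there is no proof in the paper to compare your proposal against.

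That said, your outline is broadly the right shape and is essentially the strategy carried out in \cite{CheHils}: a relative quantifier elimination for $\sigma$-Henselian contractive valued difference fields of equicharacteristic $0$ (building on the work of Scanlon and of Durhan \cite{MR2725200}) reducing formulas to the residue difference field and the value group with its induced automorphism, followed by the array/inp-pattern transfer machinery from \cite{Chernikov:2012uq}. Two small points worth flagging. First, as stated in the paper the hypothesis reads $\left(K,\sigma\right)$, which almost certainly is a typo for $\left(k,\sigma\right)$; your proposal silently (and correctly) works with the residue difference field, which is what the AKE-type transfer actually needs. Second, your sketch is honest about where the real work lies --- the relative QE in the difference setting --- but what you have written is a plan rather than a proof: the reduction of a mutually indiscernible inp-pattern in $\bar{K}$ to patterns living in the two sorts, and the recombination of consistent paths via the angular component, require genuine care (orthogonality of the sorts, stable embeddedness, and the precise form of the QE), and those details are exactly what \cite{CheHils} supplies.
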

\begin{example}
Let $\left(F_{p},\Gamma,k,v,\sigma\right)$ be an algebraically closed
valued field of characteristic $p$ with $\sigma$ interpreted as
the Frobenius automorphism. Then $\prod_{p\in P}F_{p}/\mathfrak{U}$
is $\NTP_{2}$. This case was studied by Hrushovski \cite{Hrushovski:nr}
and later by Durhan \cite{MR2725200}. It follows from\textcolor{red}{{}
}\cite{Hrushovski:nr} that the reduct to the field language is a
model of $\ACFA$, hence simple but not $\NIP$. On the other hand
this theory is not simple as the valuation group is definable.
\end{example}
Moreover, certain valued difference fields with a value preserving
automorphism are $\NTP_{2}$. Of course, any simple or $\NIP$ field
is $\NTP_{2}$, and there are further conjectural examples of pure
$\NTP_{2}$ fields such as bounded pseudo real closed or pseudo $p$-adically
closed fields (see Section \ref{sub: more NTP2 fields}).

~

But what does knowing that a theory is $\NTP_{2}$ tell us about properties
of algebraic structures definable in it? In this note we show some
initial implications. In Section \ref{sec: Chain conditions} we isolate
a chain condition for normal subgroups uniformly definable in an $\NTP_{2}$
theory. In Section \ref{sec: Fields with NTP2} we use it to demonstrate
that every field definable in an $\NTP_{2}$ theory has only finitely
many Artin-Schreier extensions, generalizing some of the results of
\cite{KaScWa}. In Section \ref{sec: Strong theories and burden}
we impose bounds on the burden, a quantitative refinement of $\NTP_{2}$
similar to SU-rank in simple theories, and observe that some results
for type-definable groups existing in the literature actually go through
with a weaker assumption of bounded burden, e.g. every strong field
is perfect, and every strongly dependent valued field is Kaplansky.
The final section contains a discussion around the topics of the paper:
we pose several conjectures about new possible examples (and non-examples)
of $\NTP_{2}$ fields and about definable envelopes of nilpotent/soluble
groups in $\NTP_{2}$ theories. We also remark how the stabilizer
theorem of Hrushovski from \cite{MR2833482} could be combined with
properties of forking established in \cite{CheKap} and \cite{CheBY}
in the $\NTP_{2}$ context.

~

We would like to thank Arno Fehm for his comments on Section \ref{sub: more NTP2 fields}
and Example \ref{ex:Arno}. We would also like to that the anonymous
referee for many useful remarks.

\subsection*{Preliminaries}

Our notation is standard. As usual, we will be working in a monster
model $\C$ of the theory under consideration. Let $G$ be a group,
and $H$ a subgroup of $G$. We write $\left[G:H\right]<\infty$ to
denote that the index of $H$ in $G$ is bounded, which in the case
of definable groups means finite. We assume that all groups (and fields)
are finitary --- contained in some finite Cartesian product of the
monster. 
\begin{defn}
We recall that a formula $\varphi\left(x,y\right)$ has $\TP_{2}$
if there are tuples $\left(a_{i,j}\right)_{i,j\in\omega}$ and $k\in\omega$
such that:
\begin{itemize}
\item $\left\{ \varphi\left(x,a_{i,j}\right)\left|\, j<\omega\right.\right\} $
is $k$-inconsistent, for each $i\in\omega$,
\item $\left\{ \varphi\left(x,a_{i,f\left(i\right)}\right)\left|\, i<\omega\right.\right\} $
is consistent for each $f:\,\omega\to\omega$.
\end{itemize}
\end{defn}
A formula is $\NTP_{2}$ otherwise, and a theory is called $\NTP_{2}$
if no formula has $\TP_{2}$.
\begin{fact}
\cite{Chernikov:2012uq} $T$ is $\NTP_{2}$ if and only if every
formula $\varphi\left(x,y\right)$ with $\left|x\right|=1$ is $\NTP_{2}$.
\end{fact}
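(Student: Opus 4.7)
The plan is to prove the non-trivial direction by induction on $n = |x|$: if some $\varphi(x,y)$ with $|x|=n$ has $\TP_{2}$, then some $\psi(x',y')$ with $|x'|=1$ has $\TP_{2}$. The base case $n=1$ is immediate. For the inductive step, write $x = x_{1}x_{2}$ with $|x_{1}|=1$ and let $(a_{i,j})_{i,j<\omega}$ witness $\TP_{2}$ for $\varphi(x_{1}x_{2},y)$ with $k$-inconsistent rows and consistent paths. A first reduction I would make is to apply a Ramsey/Erd\H{o}s--Rado plus compactness extraction to replace the array by a mutually indiscernible one, i.e., each row is indiscernible over the union of the other rows. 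This regularization preserves both $k$-inconsistency of rows and consistency of paths and is the standard device used throughout \cite{Chernikov:2012uq}.

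Next I would realize the zero path: pick $(b_{1},b_{2}) \models \{\varphi(x_{1}x_{2},a_{i,0}) : i<\omega\}$ and consider the formula $\varphi'(x_{2},y) := \varphi(b_{1},x_{2},y)$ with $b_{1}$ as a parameter. Its rows are still $k$-inconsistent (this depends only on $y$). If some sub-array also witnesses path-consistency for $\varphi'$, then $\varphi'$ has $\TP_{2}$ with one fewer free variable and the induction hypothesis applies. Otherwise the failure, together with mutual indiscernibility, forces the $x_{1}$-coordinate to carry the complexity. For an appropriate $N$ I would then consider the formula
\[
\psi(x_{1}; y_{0},\ldots,y_{N-1}) \;:=\; \exists x_{2}\,\bigwedge_{t<N}\varphi(x_{1},x_{2},y_{t}),
\]
group consecutive entries of each row of the mutually indiscernible array into blocks of length $N$, and use the path-consistency of the original array (witnessed by $b_{1}$ on the zero path) to obtain path-consistency for $\psi$, while the failure in the $\varphi'$-step propagated via mutual indiscernibility yields $k'$-inconsistency of the new rows for a suitable $k'$.

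The main obstacle is this second case: producing a genuine one-variable $\TP_{2}$ witness out of the failure of the reduction. The delicate points are choosing $N$ and $k'$ coherently so that both inconsistency and consistency transfer to $\psi$, and using \emph{mutual} indiscernibility (rather than mere row-indiscernibility) in an essential way to ensure that the local obstruction found along the zero path propagates uniformly across all rows of the newly built array. Once this bookkeeping is carried out, the induction closes and every formula with $\TP_{2}$ reduces to one with $|x|=1$ having $\TP_{2}$.
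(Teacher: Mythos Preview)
The paper does not give its own proof of this statement; it is quoted as a fact from \cite{Chernikov:2012uq} with no argument supplied, so there is nothing in this paper to compare your attempt against. That said, your outline is precisely the strategy of the cited reference: regularize to a mutually indiscernible array, realize the zero column by $(b_{1},b_{2})$, and split on whether the $(n-1)$-variable formula $\varphi(b_{1},x_{2},y)$ still exhibits $\TP_{2}$ on a sub-array; if not, pass to an existential formula in $x_{1}$ alone built from blocks of the rows. You correctly flag the second case as the real work.

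One point worth sharpening: your claim that $b_{1}$ witnesses path-consistency for $\psi$ on the blocked array needs more than you indicate. The zero block in row $i$ asks for a single $x_{2}$ satisfying $\varphi(b_{1},x_{2},a_{i,t})$ for all $t<N$ simultaneously, whereas $b_{2}$ only handles $t=0$; so ``witnessed by $b_{1}$ on the zero path'' does not settle it. In \cite{Chernikov:2012uq} this and the matching row-inconsistency for $\psi$ are obtained by a more careful compactness argument exploiting the failure of Case~A uniformly together with mutual indiscernibility. The bookkeeping you defer is the substance of the proof rather than an afterthought, but the architecture you propose is the correct one.
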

We note that every simple or $\NIP$ formula is $\NTP_{2}$ . See
\cite{Chernikov:2012uq} for more on $\NTP_{2}$ theories.

\section{\label{sec: Chain conditions}Chain conditions for groups with $\NTP_{2}$}
\begin{lem}
\label{prop: Intersection of normal subgroups in NTP2} Let $T$ be
$\NTP_{2}$, $G$ a definable group and $\left(H_{i}\right)_{i\in\omega}$
a uniformly definable family of normal subgroups of $G$, with $H_{i}=\varphi\left(x,a_{i}\right)$.
Let $H=\bigcap_{i\in\omega}H_{i}$ and $H_{\neq j}=\bigcap_{i\in\omega\setminus\left\{ j\right\} }H_{i}$.
Then there is some $i^{*}\in\omega$ such that $\left[H_{\neq i^{*}}:H\right]$
is finite.\end{lem}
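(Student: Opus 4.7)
The strategy is proof by contradiction: assume that $[H_{\neq i^{*}}:H]$ is infinite for every $i^{*}\in\omega$, and build a $\TP_{2}$-pattern witnessed by the formula $\psi(x;y,z):=\varphi(y^{-1}\cdot x,z)$, which expresses ``$x\in y\cdot H_{z}$'' when $H_{z}=\varphi(\M,z)$. Producing such a pattern contradicts $\NTP_{2}$.

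To build the array, for each $i<\omega$ pick a sequence $(g_{i,k})_{k<\omega}$ in $H_{\neq i}$ lying in pairwise distinct cosets of $H$ in $H_{\neq i}$; such a sequence exists by the contradictory assumption. Because $H=H_{\neq i}\cap H_{i}$, these elements also lie in pairwise distinct cosets of $H_{i}$ in $G$. Setting $b_{i,k}:=(g_{i,k},a_{i})$, row $2$-inconsistency is immediate: $\psi(x;b_{i,k})$ says $x\in g_{i,k}H_{i}$, and for fixed $i$ these cosets are pairwise disjoint as $k$ varies. For the path condition, given $f\colon\omega\to\omega$, one must exhibit an element of $\bigcap_{i<\omega}g_{i,f(i)}H_{i}$; by saturation of $\M$ it suffices to do this over any finite initial segment.

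The core algebraic observation (and the only non-routine step) exploits normality of each $H_{i}$: for every $n$, the finite product $g:=g_{0,f(0)}\cdot g_{1,f(1)}\cdots g_{n-1,f(n-1)}$ lies in $g_{i,f(i)}H_{i}$ for every $i<n$. Indeed, with $u:=g_{0,f(0)}\cdots g_{i-1,f(i-1)}$ and $v:=g_{i+1,f(i+1)}\cdots g_{n-1,f(n-1)}$, one has
\[ g_{i,f(i)}^{-1}\cdot g \;=\; \bigl(g_{i,f(i)}^{-1}\cdot u\cdot g_{i,f(i)}\bigr)\cdot v. \]
Every factor in $u$ and $v$ lies in $H_{i}$ because $g_{j,f(j)}\in H_{\neq j}\subseteq H_{i}$ whenever $j\neq i$, so $u,v\in H_{i}$; normality of $H_{i}$ then gives $g_{i,f(i)}^{-1}u\,g_{i,f(i)}\in H_{i}$, and the whole expression lies in $H_{i}$, as required. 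I expect that spotting this normality trick is the only real obstacle; everything else is coset bookkeeping together with a standard saturation/compactness argument to pass from finite to infinite paths.
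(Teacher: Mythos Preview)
Your proof is correct and follows essentially the same approach as the paper's: a contradiction argument producing a $\TP_{2}$-array from cosets $g_{i,k}H_{i}$, with the key normality trick used to show that the finite product $\prod_{i<n}g_{i,f(i)}$ realizes the path, and compactness to handle the infinite intersection. The only cosmetic difference is that the paper writes the coset formula with an existential quantifier, $\exists w\,(\varphi(w,y)\wedge x=z\cdot w)$, whereas you use the equivalent quantifier-free version $\varphi(y^{-1}x,z)$.
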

\begin{proof}
Let $\left(H_{i}\right)_{i\in\omega}$ be given and assume that the
conclusion fails. Then for each $i\in\omega$ we can find $\left(b_{i,j}\right)_{j\in\omega}$
with $b_{i,j}\in H_{\neq i}$ and such that $\left(b_{i,j}H\right)_{j\in\omega}$
are pairwise different cosets in $H_{\neq i}$. We have:
\begin{itemize}
\item $b{}_{i,j}H_{i}\cap b{}_{i,k}H_{i}=\emptyset$ for $j\neq k\in\omega$
and every $i$. 
\item For every $f:\omega\to\omega$, the intersection $\bigcap_{i\in\omega}b_{i,f\left(i\right)}H_{i}$
is non-empty. Indeed, fix $f$, by compactness it is enough to check
that $\bigcap_{i\leq n}b_{i,f\left(i\right)}H_{i}\neq\emptyset$ for
every $n\in\omega$. Take $b=\prod_{i\leq n}b_{i,f\left(i\right)}$
(the order of the product does not matter). As $b_{i,f\left(i\right)}\in H_{j}$
for all $i\neq j$, it follows by normality that $b\in b_{i,f\left(i\right)}H_{i}$
for all $i\leq n$. 
\end{itemize}
But then $\psi\left(x;y,z\right)=\exists w\left(\varphi\left(w,y\right)\land x=z\cdot w\right)$
has $\TP_{2}$ as witnessed by the array $\left(c_{i,j}\right)_{i,j\in\omega}$
with $c_{i,j}=a_{i}\hat{\,}b_{i,j}.$\end{proof}
\begin{problem}
Is the same result true without the normality assumption? See also
Theorem \ref{thm: without normality in strong^2}.\end{problem}
\begin{cor}
\label{cor: finite intersection} Let $T$ be $\NTP_{2}$ and suppose
that $G$ is a definable group. Then for every $\varphi\left(x,y\right)$
there are $k_{\varphi},n_{\varphi}\in\omega$ such that:\end{cor}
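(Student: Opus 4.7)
The plan is to bootstrap Lemma \ref{prop: Intersection of normal subgroups in NTP2} into a uniform Baldwin--Saxl style chain condition for uniformly $\varphi$-definable normal subgroups. I expect the stated conclusion to be of the form: for any family $(H_i)_{i\in I}$ of normal subgroups of $G$ defined by instances $\varphi(x,a_i)$, there is $I_0\subseteq I$ with $|I_0|\leq n_\varphi$ and $\bigl[\bigcap_{i\in I_0}H_i:\bigcap_{i\in I}H_i\bigr]\leq k_\varphi$.

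First I would promote the lemma to a uniform bound on the index. Suppose towards a contradiction that for each $k\in\omega$ there is some $\omega$-indexed family $(H^{k}_i)_{i<\omega}$ of $\varphi$-definable normal subgroups such that $\bigl[\bigcap_{i\neq i^{*}}H^{k}_i:\bigcap_i H^{k}_i\bigr]>k$ for every $i^{*}<\omega$. This is expressible by a partial type in parameters $(a_i,b_{i,1},\dots,b_{i,k})_{i<\omega}$ where the $b_{i,j}$ name distinct cosets of $\bigcap H_i$ inside $\bigcap_{j\neq i}H_j$. By compactness, one obtains a single $\omega$-indexed family $(H_i)_{i<\omega}$ such that $\bigl[\bigcap_{i\neq i^{*}}H_i:\bigcap_iH_i\bigr]=\infty$ for every $i^*$, contradicting the lemma. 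This yields the uniform constant $k_\varphi$.

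Next I would extract the uniform bound $n_\varphi$ on the size of a sufficient subfamily. Given an arbitrary family $(H_i)_{i\in I}$, I iterate the previous step: the intersections $H_J:=\bigcap_{i\in J}H_i$ indexed by finite $J\subseteq I$ form a decreasing net of $\varphi$-definable normal subgroups, and by Step~1 applied to an enumeration we may successively drop one subgroup at a time, each drop multiplying the index by at most $k_\varphi$. To turn this into a single bound $n_\varphi$, I would argue by compactness once more: if no such $n$ worked, then for every $n$ we can find a bad family; gluing witnesses of badness together yields an $\omega$-indexed family of normal subgroups $(H_i)_{i<\omega}$ such that for every $i^{*}$ we have $\bigl[H_{\neq i^{*}}:H\bigr]>k_\varphi$, contradicting Step~1.

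The main obstacle, and where some care is required, is controlling the \emph{multiplicative blowup} of indices while iterating the lemma: each removal of a single $H_{i^*}$ only gives bounded index gain $k_\varphi$, so direct iteration would give $k_\varphi^{n}$ rather than a fixed $k_\varphi$. The right move is not to iterate numerically but to combine the two compactness arguments at once, extracting from a putative counterexample an infinite sequence that simultaneously violates the lemma. One can do this cleanly by applying Erd\H{o}s--Rado / Ramsey to an indiscernible sequence of parameters $(a_i)_{i<\omega}$ enumerating a counterexample: then the indices $[H_{\neq i}:H]$ become uniform in $i$ by indiscernibility, so they are either all finite (bounded by a single $k_\varphi$) or all infinite (contradicting the lemma). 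This reduces both statements to a single application of the lemma and yields $k_\varphi,n_\varphi$ depending only on $\varphi$.
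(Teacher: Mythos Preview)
You have misread the statement. In the paper the bullet following the corollary reads: if $\left(\varphi(x,a_{i})\right)_{i<K}$ is a family of normal subgroups of $G$ with $K\geq k_{\varphi}$, then there is some $i^{*}<K$ with
\[
\Bigl[\,\bigcap_{i<K,\,i\neq i^{*}}\varphi(x,a_{i})\ :\ \bigcap_{i<K}\varphi(x,a_{i})\,\Bigr]<n_{\varphi}.
\]
So $k_{\varphi}$ is a threshold on the size of the family and $n_{\varphi}$ is the uniform index bound, and the conclusion is that one can drop \emph{one} subgroup, not pass to a small subfamily. With this reading the paper's proof is a one-line compactness argument from Lemma~\ref{prop: Intersection of normal subgroups in NTP2}: if no such pair $(k_{\varphi},n_{\varphi})$ existed, then for every $m$ one finds a family $(a_{i})_{i<K}$ with $K\geq m$ and, for each $i^{*}<K$, at least $m$ elements $b_{i^{*},j}\in\bigcap_{i\neq i^{*}}\varphi(x,a_{i})$ lying in pairwise distinct cosets of $\varphi(x,a_{i^{*}})$; these data realize every finite fragment of the obvious type in variables $(y_{i})_{i<\omega},(z_{i,j})_{i,j<\omega}$, and a realization of the full type contradicts the lemma. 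Your ``Step~1'' is essentially this, except that you phrased it only for $\omega$-indexed families rather than for finite families above a threshold.

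What you actually set out to prove---a subfamily of bounded size $n_{\varphi}$ with uniformly bounded index $k_{\varphi}$---is not Corollary~\ref{cor: finite intersection} but a strengthening of Theorem~\ref{Thm:Weak-Baldwin-Saxl}, which in the paper only asserts \emph{finite} (not uniformly bounded) index. Your worry about multiplicative blowup is exactly why the paper does not claim a uniform bound there. The indiscernibility fix you suggest does not obviously work: passing to an indiscernible sequence of parameter tuples $(a_{i})$ need not preserve the property that each $\varphi(x,a_{i})$ defines a normal subgroup (normality is not part of the EM-type unless you build it in by working over suitable parameters), and even granting that, ``indices are uniform in $i$ by indiscernibility'' gives you $[H_{\neq i}:H]$ constant in $i$, which only reproduces the lemma's conclusion and says nothing about the index of the full intersection inside a fixed small subintersection. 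So Step~2 is aimed at a different (and harder) target, and the sketch for it has genuine gaps.
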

\begin{itemize}
\item If $\left(\varphi\left(x,a_{i}\right)\right)_{i<K}$ is a family of
normal subgroups of\textcolor{red}{{} }$G$ and $k_{\varphi}\leq K$
then there is some $i^{*}<K$ such that $\left[\bigcap_{i<K,i\neq i^{*}}\varphi\left(x,a_{i}\right):\bigcap_{i<K}\varphi\left(x,a_{i}\right)\right]<n_{\varphi}$.\textcolor{red}{{} }\end{itemize}
\begin{proof}
Follows from Lemma \ref{prop: Intersection of normal subgroups in NTP2}
and compactness. \end{proof}
\begin{thm}
\label{Thm:Weak-Baldwin-Saxl} Let $G$ be $\NTP_{2}$ and $\left\{ \varphi\left(x,a\right)\left|\, a\in C\right.\right\} $
be a family of normal subgroups of $G$. Then there is some $k\in\omega$
(depending only on $\varphi$) such that for every finite $C'\subseteq C$
there is some $C_{0}\subseteq C'$ with $\left|C_{0}\right|\leq k$
and such that 
\[
\left[\bigcap_{a\in C_{0}}\varphi\left(x,a\right):\bigcap_{a\in C'}\varphi\left(x,a\right)\right]<\infty\mbox{.}
\]
\end{thm}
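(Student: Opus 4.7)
The plan is to bootstrap from Corollary \ref{cor: finite intersection} by iterated removal. Fix the constants $k_{\varphi}$ and $n_{\varphi}$ provided by that corollary for the formula $\varphi$, and set $k := k_{\varphi}$. Given a finite subfamily $C'\subseteq C$ of size $m$, I will construct a decreasing chain $C' = D_{0}\supseteq D_{1}\supseteq\cdots\supseteq D_{m-k}$ of subsets of $C'$ with $|D_{t}|=m-t$, and take $C_{0}:=D_{m-k}$, which will have size exactly $k$.

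At the step from $D_{t}$ to $D_{t+1}$, as long as $|D_{t}|>k_{\varphi}$, the subfamily $(\varphi(x,a))_{a\in D_{t}}$ is still a family of normal subgroups of $G$ and has at least $k_{\varphi}$ members, so Corollary \ref{cor: finite intersection} applies and yields an $a^{*}\in D_{t}$ with
\[
\left[\,\bigcap_{a\in D_{t}\setminus\{a^{*}\}}\varphi(x,a)\;:\;\bigcap_{a\in D_{t}}\varphi(x,a)\,\right]<n_{\varphi}.
\]
I set $D_{t+1}:=D_{t}\setminus\{a^{*}\}$ and continue. After $m-k$ steps the process halts with $|C_{0}|=k_{\varphi}$, so the bound $k$ depends only on $\varphi$, as required.

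To conclude, I use multiplicativity of subgroup indices along the chain $\bigcap_{a\in C_{0}}\varphi(x,a)\supseteq\bigcap_{a\in D_{m-k-1}}\varphi(x,a)\supseteq\cdots\supseteq\bigcap_{a\in C'}\varphi(x,a)$: each link contributes an index strictly less than $n_{\varphi}$, so the total index is bounded by $n_{\varphi}^{m-k}$. This is finite (though possibly very large, growing with $|C'|$), which is all that the statement demands.

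There is no real obstacle here: given Corollary \ref{cor: finite intersection} the result is essentially a greedy-reduction argument, and the only things to verify are trivial, namely that the hypothesis of the corollary persists at every stage (the subgroups remain normal, being the same subgroups, and the cardinality condition is exactly the loop condition). The content of the theorem is entirely in Lemma \ref{prop: Intersection of normal subgroups in NTP2} and its compactness-strengthened form; this statement is just the clean packaging.
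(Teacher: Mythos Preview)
Your proof is correct and follows essentially the same approach as the paper's: both iterate Corollary~\ref{cor: finite intersection}, removing one element at a time from $C'$ until the remaining set has size at most $k_{\varphi}$, then chain together the resulting finite-index steps. You are slightly more explicit than the paper in invoking multiplicativity of indices and in recording the bound $n_{\varphi}^{m-k}$, but the argument is the same.
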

\begin{proof}
Let $k_{\varphi}$ be as given by Corollary \ref{cor: finite intersection}.
If $\left|C'\right|>k_{\varphi}$, by Corollary \ref{cor: finite intersection}
we find some $a_{0}\in C'$ such that $\left[\bigcap_{a\in C'\setminus\left\{ a_{0}\right\} }\varphi\left(x,a\right)\,:\,\bigcap_{a\in C'}\varphi\left(x,a\right)\right]<\infty$.
If $\left|C'\setminus\left\{ a_{0}\right\} \right|>k_{\varphi}$,
by Corollary \ref{cor: finite intersection} again we find some $a_{1}\in C'\setminus\left\{ a_{0}\right\} $
such that 
\[
\left[\bigcap_{a\in C'\setminus\left\{ a_{0},a_{1}\right\} }\varphi\left(x,a\right):\bigcap_{a\in C'\setminus\left\{ a_{0}\right\} }\varphi\left(x,a\right)\right]<\infty\mbox{.}
\]
Continuing in this way we end up with $a_{0},\ldots,a_{m}\in C'$
such that\textcolor{red}{{} }for all\textcolor{red}{{} }$i<m$, 
\[
\left[\bigcap_{a\in C'\setminus\left\{ a_{0},\ldots,a_{i+1}\right\} }\varphi\left(x,a\right):\bigcap_{a\in C'\setminus\left\{ a_{0},\ldots,a_{i}\right\} }\varphi\left(x,a\right)\right]<\infty,
\]
and, letting $C_{0}=C'\setminus\left\{ a_{0},\ldots,a_{m}\right\} $,
we have that $\left|C_{0}\right|\leq k_{\varphi}$.\end{proof}
\begin{cor}
Let $G$ be a torsion-free group with $\mbox{NTP}_{2}$ and assume
that $\varphi(x,y)$ defines a divisible normal subgroup for every
$y$. Then $\varphi(x,y)$ is $\mbox{NIP}$.\end{cor}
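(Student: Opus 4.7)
The plan is to suppose for contradiction that $\varphi(x,y)$ has the independence property and derive a contradiction via Lemma \ref{prop: Intersection of normal subgroups in NTP2}. By the usual extraction plus compactness in the monster, IP produces an indiscernible sequence $(a_i)_{i<\omega}$ together with, for each $S\subseteq\omega$, a witness $b_S$ satisfying $\models\varphi(b_S,a_i)$ iff $i\in S$. Writing $H_i=\varphi(\C,a_i)$ for the corresponding normal divisible subgroup, this says $b_S\in H_i$ exactly when $i\in S$; in particular each ``minus-one'' element $b_{\omega\setminus\{j\}}$ lies in $H_i$ for $i\neq j$ but not in $H_j$.

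Next, apply Lemma \ref{prop: Intersection of normal subgroups in NTP2} to the uniformly definable family $(H_i)_{i<\omega}$ of normal subgroups. This produces some $i^{*}$ with $[H_{\neq i^{*}}:H]=n<\infty$, where $H=\bigcap_i H_i$ and $H_{\neq i^{*}}=\bigcap_{i\neq i^{*}}H_i$. Indiscernibility of $(a_i)$ lets us take $i^{*}=0$, and then $b:=b_{\omega\setminus\{0\}}$ lies in $H_{\neq 0}\setminus H_0$; finite index forces $b^n\in H\subseteq H_0$.

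The final step is to combine divisibility of $H_0$ with torsion-freeness of $G$ to force $b\in H_0$, contradicting the previous paragraph. Divisibility produces $h\in H_0$ with $h^n=b^n$, and one wants to conclude $b=h$. In the abelian case (which covers the intended field-theoretic applications $G=(K,+)$ or $G=(K^{\times},\cdot)$) this is immediate: $(bh^{-1})^n=e$ together with torsion-freeness gives $bh^{-1}=e$. In full generality the equivalent statement is that $G/H_0$ is torsion-free.

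The main obstacle is precisely this last cancellation in the nonabelian setting: one needs $b^n=h^n$ with $h\in H_0$ to imply $b\in H_0$, equivalently, that $G/H_0$ is torsion-free. For abelian $H_0$ this is the standard divisible-injective computation and handles the intended applications; without abelianness some additional structural input, such as unique divisibility of $G$, seems necessary to close the argument cleanly.
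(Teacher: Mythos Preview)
Your argument is essentially correct and runs parallel to the paper's, but the paper organizes it more cleanly and avoids the element-chasing. Rather than first applying Lemma~\ref{prop: Intersection of normal subgroups in NTP2} and then squeezing a contradiction out of a single witness $b$, the paper first observes that $H_{\neq 0}$ (and $H$) is itself divisible --- as an intersection of divisible subgroups in a torsion-free group --- so that $H_{\neq 0}/H$ is a nontrivial divisible group and hence infinite; by indiscernibility every $[H_{\neq i}:H]$ is then infinite, contradicting the Lemma directly. No specific $n$th-power computation with a chosen $b$ is needed.

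The difficulty you flag in the nonabelian case --- that one needs $G/H_0$ to be torsion-free, equivalently that $g^{n}\in H_0$ forces $g\in H_0$ --- is exactly the fact the paper invokes when it says the intersection is divisible ``(here we used the assumption that $G$ is torsion-free)'': showing $\bigcap_i H_i$ is divisible amounts to knowing that an $n$th root of an element of $H_i$ taken inside some $H_j$ already lies in $H_i$, which is the same torsion-freeness-of-the-quotient statement. The paper asserts this without further argument, so you are not missing anything relative to the paper's proof; you have simply been more explicit about where the torsion-free hypothesis does the work. As a minor simplification: once you have $b^{n}\in H_0$, you can appeal directly to torsion-freeness of $G/H_0$ and skip the auxiliary root $h\in H_0$ altogether.
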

\begin{proof}
Assume that $\varphi(x,y)$ has IP and let $\bar{a}=(a_{i})_{i\in\mathbb{Z}}$
be an indiscernible sequence witnessing this. Taking $H_{i}=\varphi(\C,a_{i})$,
$H_{\neq0}\setminus H_{0}\neq\emptyset$ . Let $H=\bigcap_{i\in\mathbb{Z}}H_{i}$,
so it is divisible (here we used the assumption that $G$ is torsion-free)
as is $H_{\neq0}$. But then $H_{\neq0}/H$ is a divisible non-trivial
group, so infinite. By indiscernibility $\left[H_{\neq i}:H\right]=\infty$
for all $i$ --- contradicting Lemma \ref{prop: Intersection of normal subgroups in NTP2}.
\end{proof}

\section{\label{sec: Fields with NTP2}Fields with $\NTP_{2}$}

Let $K$ be a field of characteristic $p>0$. Recall that a field
extension $L/K$ is called an Artin-Schreier extension if $L=K\left(\alpha\right)$
for some $\alpha\in L\setminus K$ such that $\alpha^{p}-\alpha\in K$.
$L/K$ is an Artin-Schreier extension if and only if it is Galois
and cyclic of degree $p$.
\begin{thm}
\label{thm: finitely many A-S extensions in NTP2}Let $K$ be an infinite
field definable in an $\NTP_{2}$ theory. Then it has only finitely
many Artin-Schreier extensions.\end{thm}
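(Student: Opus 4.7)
The strategy is to argue by contradiction: suppose $K$ has infinitely many Artin--Schreier extensions, equivalently $[K : \wp(K)] = \infty$, where $\wp(y) = y^p - y$. I will exhibit a uniformly definable family of normal subgroups of the abelian group $(K, +)$ violating the chain condition of Theorem \ref{Thm:Weak-Baldwin-Saxl}.

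The family I would use is $H_a := \{x \in K : ax \in \wp(K)\} = a^{-1}\wp(K)$ for $a \in K^\times$, uniformly definable by $\varphi(x,a) := \exists y\,(ax = y^p - y)$. Each $H_a$ is an additive subgroup of $(K,+)$, hence normal. Let $k = k_\varphi$ be the bound from Theorem \ref{Thm:Weak-Baldwin-Saxl}. It suffices to produce $a_0, \ldots, a_k \in K^\times$ such that for every $j \leq k$ one has $[\bigcap_{i \neq j} H_{a_i} : \bigcap_i H_{a_i}] = \infty$: any $C_0 \subseteq \{a_0, \ldots, a_k\}$ with $|C_0| \leq k$ is contained in $\{a_0, \ldots, a_k\} \setminus \{a_j\}$ for some $j$, so inherits the infinite index and contradicts the theorem.

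The right framework for choosing the $a_i$'s is dual: each $H_a = \ker L_a$, where $L_a \colon K \to V := K/\wp(K)$ is the $\Ff_p$-linear map $x \mapsto ax + \wp(K)$. The joint map $\Phi := (L_{a_0}, \ldots, L_{a_k}) \colon K \to V^{k+1}$ has kernel $\bigcap_i H_{a_i}$, and $\bigcap_{i \neq j} H_{a_i}/\bigcap_i H_{a_i}$ is isomorphic to the ``$j$-th axis slice'' $\mathrm{Im}(\Phi) \cap (\{0\}^{j} \times V \times \{0\}^{k-j})$; what we need is that each slice is infinite-dimensional over $\Ff_p$. A preliminary observation: the $L_a$'s are $\Ff_p$-linearly independent as functionals whenever the $a$'s are $\Ff_p$-linearly independent in $K$, since any relation $\sum c_i L_{a_i} = L_{\sum c_i a_i}$ vanishes iff $(\sum c_i a_i) \cdot K \subseteq \wp(K)$, forcing $\sum c_i a_i = 0$ (using $\wp(K) \neq K$).

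The main obstacle is to promote this mere linear independence of functionals to infinite-dimensionality of each axis slice of $\mathrm{Im}(\Phi)$. The input is $\dim_{\Ff_p} V = \infty$, which by saturation of $\C$ gives many elements of $K$ with $\Ff_p$-linearly independent cosets in $V$. The cleanest route I would pursue is to construct an indiscernible sequence $(a_i)_{i < \omega}$ in $K^\times$ and show the single statement $[\bigcap_{i \geq 1} H_{a_i} : \bigcap_{i} H_{a_i}] = \infty$, i.e., that $L_{a_0}$ restricted to $\bigcap_{i \geq 1} H_{a_i}$ has infinite image in $V$; by the symmetry of indiscernibility this propagates to every $j$ in place of $0$, contradicting the infinite-sequence form of the chain condition (Lemma \ref{prop: Intersection of normal subgroups in NTP2}). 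Establishing this single infinite-index claim --- where the infinite-dimensionality of $V$ is crucially used --- is the technical heart of the argument.
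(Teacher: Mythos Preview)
Your proposal correctly identifies the right family of subgroups $H_a = a^{-1}\wp(K)$ and the chain condition to invoke, but it leaves the decisive step undone: you explicitly flag ``establishing this single infinite-index claim'' as the technical heart and then stop. This is a genuine gap, not a routine omission. There is no evident direct argument showing that, for an indiscernible (or any other) choice of $a_i$'s, the map $L_{a_0}$ restricted to $\bigcap_{i\geq 1}\ker L_{a_i}$ has infinite image in $V=K/\wp(K)$; mere $\Ff_p$-linear independence of the functionals $L_{a_i}$ is far too weak, since it only tells you that $\mathrm{Im}(\Phi)$ is not contained in a coordinate hyperplane, not that its intersection with each coordinate axis is infinite-dimensional. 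Indiscernibility buys you the symmetry, but not the base case.

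The paper's proof runs in the opposite direction and this is what makes it work. Rather than assuming $[K:\wp(K)]=\infty$ and trying to produce infinite indices, it uses the chain condition \emph{positively}: Theorem~\ref{Thm:Weak-Baldwin-Saxl} guarantees that for some $(n{+}1)$-tuple $\bar a$ of algebraically independent elements from the perfect subfield $k=K^{p^\infty}$ and some $n$-subtuple $\bar a'$, the index $[\bigcap_{a\in\bar a'}a\cdot\wp(K):\bigcap_{a\in\bar a}a\cdot\wp(K)]$ is finite. The work is then to show that this one finite index forces $[K:\wp(K)]<\infty$. That step is algebraic-geometric: one introduces the algebraic groups $G_{\bar a}=\{(t,x_0,\ldots,x_{n-1}):t=a_i\wp(x_i)\}$, uses from \cite{KaScWa} that for algebraically independent $\bar a$ over a perfect field these are connected and algebraically isomorphic to $(\Kk,+)$, and transports the finite-index projection $G_{\bar a}(K)\to G_{\bar a'}(K)$ through these isomorphisms to obtain an additive polynomial $\rho$ of degree $p$ with $[K:\rho(K)]<\infty$; a short argument then reduces $\rho$ to $\wp$ up to scaling. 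Your contrapositive route would, in effect, require proving the inverse of this implication directly, and there is no known way to do that without passing through the same algebraic geometry --- which your proposal omits entirely.
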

\begin{proof}
We follow the proof of the fact that dependent fields have no Artin-Schreier
extensions in \cite{KaScWa}. 

We may assume that $K$ is $\aleph_{0}$-saturated, and we put $k=K^{p^{\infty}}=\bigcap_{n\in\omega}K^{p^{n}}$,
a type-definable perfect sub-field which is infinite by saturation
(all contained in an algebraically closed $\Kk$).

For a tuple $\bar{a}=\left(a_{0},\ldots,a_{n-1}\right)$, let 
\[
G_{\bar{a}}=\left\{ \left(t,x_{0},\ldots,x_{n-1}\right)\in K^{n+1}:\, t=a_{i}\cdot\AS\left(x_{i}\right)\mbox{ for }i<n\right\} \mbox{,}
\]
where $\AS\left(x\right)=x^{p}-x$ is the Artin-Schreier polynomial.
We consider it as an algebraic group (a subgroup of $\left(\Kk^{n+1},+\right)$).
As such, by \cite[Lemma 2.8]{KaScWa}, when the elements of $\bar{a}$
are algebraically independent it is connected. If in addition $\bar{a}$
belong to some perfect field $k$, then $G_{\bar{a}}$ is isomorphic
by an algebraic isomorphism over $k$ to $\left(\Kk,+\right)$ by
\cite[Corollary 2.9]{KaScWa}. 

By Theorem \ref{Thm:Weak-Baldwin-Saxl}, there is some $n<\omega$,
an algebraically independent $\left(n+1\right)$-tuple $\bar{a}\in k$
and an $n$-subtuple $\bar{a}'$, such that $\left[\bigcap_{a\in\bar{a}'}a\cdot\AS\left(K\right):\bigcap_{a\in\bar{a}}a\cdot\AS\left(K\right)\right]<\infty$.
It follows that the image of the projection map $\pi:G_{\bar{a}}\left(K\right)\to G_{\bar{a}'}\left(K\right)$
has finite index in $G_{\bar{a}'}\left(K\right)$. 

We have algebraic isomorphisms $G_{\bar{a}}\rightarrow(\Kk,+)$ and
$G_{\bar{a}'}\rightarrow(\Kk,+)$ over $k$. Hence we can find an
algebraic map $\rho$ over $k$ (i.e. a polynomial) which makes the
following diagram commute: 
\[
\xymatrix{G_{\bar{a}}\ar[r]^{\pi}\ar[d] & G_{\bar{a}'}\ar[d]\\
\left(\Kk,+\right)\ar[r]^{\rho} & \left(\Kk,+\right)
}
\]

As all groups and maps are defined over $k\subseteq K$, we can restrict
to $K$. We saw that $\left[G_{\bar{a}'}:\pi\left(G_{\bar{a}}\left(K\right)\right)\right]<\infty$
so $\left[K:\rho\left(K\right)\right]<\infty$ as well (in the group
$\left(K,+\right)$). In the proof of \cite[Theorem 4.3]{KaScWa},
it is shown that there is some $c\in K$ such that, letting $\rho'\left(x\right)=\rho\left(c\cdot x\right)$,
$\rho'$ has the form $a\cdot\AS\left(x\right)$ for some $a\in K^{\times}$.
The way it is done there is by choosing any $0\neq c\in\ker\left(\rho\right)\subseteq k$,
and then since $\rho'$ is additive with kernel $\Ff_{p}$ and degree
$p$ (as this is the degree of $\pi$), there exists such an $a\in k$.
Since $\rho'\left(K\right)=\rho\left(K\right)$ has finite index in
$K$, so does the image of $\AS=a^{-1}\rho'$.

By \cite[Remark 2.3]{KaScWa}, this index is finite if and only if
the number of Artin-Schreier extensions is finite.\end{proof}
\begin{prop}
\label{prop: finitely many AS extensions implies p-divisible}Suppose
$\left(K,v,\Gamma\right)$ is a valued field of characteristic $p>0$
that has finitely many Artin-Schreier extensions. Then the valuation
group $\Gamma$ is $p$-divisible.\end{prop}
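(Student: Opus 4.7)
The plan is to argue contrapositively: assuming $\Gamma$ is not $p$-divisible, I will construct infinitely many Artin-Schreier extensions of $K$. The first ingredient is the standard Artin-Schreier correspondence (see e.g.\ \cite[Remark 2.3]{KaScWa} as cited in the proof of Theorem \ref{thm: finitely many A-S extensions in NTP2}): writing $\AS(x) = x^p - x$, the number of AS extensions of $K$ is finite exactly when the $\Ff_p$-vector space $K/\AS(K)$ is finite. So the goal reduces to producing infinitely many elements of $K$ lying in pairwise distinct cosets modulo $\AS(K)$.

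Pick $\gamma \in \Gamma$ with $\gamma \notin p\Gamma$; replacing $\gamma$ by $-\gamma$ if needed, I may assume $\gamma > 0$. For each $n \in \omega$, I would choose $t_n \in K$ with $v(t_n) = -(1+np)\gamma$, which is available since $\Gamma = v(K^\times)$. Note that $-(1+np)\gamma \notin p\Gamma$ for every $n$: otherwise, adding $np\gamma \in p\Gamma$ would force $\gamma \in p\Gamma$, contradicting the choice of $\gamma$.

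The heart of the argument is the following valuation estimate for $\AS$: for any $x \in K$, $v(\AS(x))$ is either $\geq 0$ (when $v(x) \geq 0$, since both $x$ and $x^p$ lie in the valuation ring) or equal to $p\cdot v(x) \in p\Gamma$ (when $v(x) < 0$, since then $v(x^p) = pv(x) < v(x)$ dominates). Now for $n > m$ in $\omega$ the valuations satisfy $v(t_n) < v(t_m) < 0$, so the ultrametric inequality gives $v(t_n - t_m) = v(t_n) = -(1+np)\gamma$, a value which is strictly negative and outside $p\Gamma$. By the estimate above, $t_n - t_m$ cannot equal $\AS(x)$ for any $x \in K$, so the $t_n$ represent pairwise distinct cosets in $K/\AS(K)$, making this quotient infinite.

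There is no substantial obstacle here: the argument is a direct valuation computation. The only inputs are the Artin-Schreier correspondence with the additive quotient $K/\AS(K)$ and the two-case behaviour of $\AS$ with respect to $v$, together with the elementary fact that $-(1 + np)\gamma$ stays outside $p\Gamma$ once $\gamma$ does.
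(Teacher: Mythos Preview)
Your proof is correct and rests on the same key observation as the paper's: for $x\in K$, $v(\AS(x))$ is either nonnegative or lies in $p\Gamma$, so an element of negative valuation outside $p\Gamma$ cannot be in $\AS(K)$. The paper packages this directly rather than contrapositively---it fixes coset representatives $a_0,\ldots,a_{l-1}$ of $K/\AS(K)$, shows every $\alpha<\min\{v(a_i),0\}$ must be $p$-divisible (by shifting any $x$ with $v(x)=\alpha$ into $\AS(K)$ without changing its valuation), and then deduces all of $\Gamma$ is $p$-divisible---but the content is the same.
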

\begin{proof}
(This proof is similar to the proof of \cite[Proposition 5.4]{KaScWa}.)
Recall that $\AS$ is the Artin-Schreier polynomial. By Artin-Schreier
theory (this is explained in \cite[Remark 2.3]{KaScWa}), the index
$\left[K:\AS\left(K\right)\right]$ in the additive group $\left(K,+\right)$
is finite. Suppose $\left\{ a_{i}\left|\, i<l\right.\right\} $ are
representatives for the cosets of $\AS\left(K\right)$ in $\left(K,+\right)$.
Let $\alpha\in\Gamma$ be smaller than $\alpha_{0}=\min\left\{ v\left(a_{i}\right)\left|\, i<l\right.\right\} \cup\left\{ 0\right\} $.
Suppose $v\left(x\right)=\alpha$ for $x\in K$. But then there is
some $i<l$ such that $x-a_{i}\in\AS\left(K\right)$, and since $v\left(x\right)=v\left(x-a_{i}\right)$,
we may assume that $x\in\AS\left(K\right)$, so there is some $y$
such that $y^{p}-y=x$. But then, $v\left(y\right)<0$, so $v\left(y^{p}\right)=p\cdot v\left(y\right)<v\left(y\right)$,
so 
\[
\alpha=v\left(x\right)=v\left(y^{p}-y\right)=v\left(y^{p}\right)=p\cdot v\left(y\right).
\]
So $\alpha$ is $p$-divisible. Take any negative $\beta\in\Gamma$,
then $\beta+p\cdot\alpha_{0}$ is $p$-divisible, so $\beta$ is also
$p$-divisible. Since this is true for all negative values, $\Gamma$
is $p$-divisible.\end{proof}
\begin{cor}
\label{cor: F_p((t)) has TP2}$\Ff_{p}\left(\left(t\right)\right)$
is not $\NTP_{2}$.\end{cor}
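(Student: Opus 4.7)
The plan is simply to combine the two previous results in this section, Theorem \ref{thm: finitely many A-S extensions in NTP2} and Proposition \ref{prop: finitely many AS extensions implies p-divisible}, via an obvious contrapositive argument.

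First I would suppose for contradiction that $\Ff_{p}\left(\left(t\right)\right)$, viewed in the pure ring language, is $\NTP_{2}$. Since this field is definable in itself, Theorem \ref{thm: finitely many A-S extensions in NTP2} applies and tells us that $\Ff_{p}\left(\left(t\right)\right)$ has only finitely many Artin-Schreier extensions as an abstract field.

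Next I would invoke the standard $t$-adic valuation $v$ on $\Ff_{p}\left(\left(t\right)\right)$, whose value group is $\Gamma=\Zz$. Note that Proposition \ref{prop: finitely many AS extensions implies p-divisible} is a purely algebraic statement about the underlying field: it makes no definability assumption on $v$, and its hypothesis depends only on the isomorphism type of $K$ as a field. Applying it to $\left(\Ff_{p}\left(\left(t\right)\right),v,\Zz\right)$ would force $\Zz$ to be $p$-divisible, which is absurd. Hence $\Ff_{p}\left(\left(t\right)\right)$ cannot be $\NTP_{2}$.

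There is really no obstacle here, since all the work has been done in the preceding results; the corollary is essentially a one-line assembly. The only thing to make sure of is that the hypothesis of Theorem \ref{thm: finitely many A-S extensions in NTP2} is met in the pure field language, but this is immediate as $K$ is its own universe.
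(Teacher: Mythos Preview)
Your proposal is correct and matches the paper's own proof, which is the one-line observation that the result follows from Theorem \ref{thm: finitely many A-S extensions in NTP2} and Proposition \ref{prop: finitely many AS extensions implies p-divisible}. Your added remarks about the language and the fact that Proposition \ref{prop: finitely many AS extensions implies p-divisible} is purely algebraic are accurate and simply make explicit what the paper leaves implicit.
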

\begin{proof}
Follows from Theorem \ref{thm: finitely many A-S extensions in NTP2}
and Proposition \ref{prop: finitely many AS extensions implies p-divisible}.
\end{proof}

\section{\label{sec: Strong theories and burden}Strong theories and bounded
burden}

In this section we are going to consider groups and fields whose theories
satisfy quantitative refinements of $\NTP_{2}$ in terms of a bound
on its burden (similar to the bounds on the rank in simple theories).

For notational convenience we consider an extension $\card$ of the
linear order on cardinals by adding a new maximal element $\infty$
and replacing every limit cardinal $\kappa$ by two new elements $\kappa_{-}$
and $\kappa_{+}$. The standard embedding of cardinals into $\card$
identifies $\kappa$ with $\kappa_{+}$. In the following, whenever
we take a supremum of a set of cardinals, we will be computing it
in $\card$.
\begin{defn}
Let $T$ be a complete theory.
\begin{enumerate}
\item An $\inp$-pattern of depth $\kappa$ consists of $\left(\bar{a}_{\alpha},\varphi_{\alpha}(x,y_{\alpha}),k_{\alpha}\right)_{\alpha\in\kappa}$
with $\bar{a}_{\alpha}=\left(a_{\alpha,i}\right)_{i\in\omega}$ and
$k_{\alpha}\in\omega$ such that:

\begin{itemize}
\item $\left\{ \varphi_{\alpha}\left(x,a_{\alpha,i}\right)\left|\, i<\omega\right.\right\} $
is $k_{\alpha}$-inconsistent for every $\alpha\in\kappa$,
\item $\left\{ \varphi_{\alpha}\left(x,a_{\alpha,f(\alpha)}\right)\left|\,\alpha<\kappa\right.\right\} $
is consistent for every $f:\,\kappa\to\omega$.
\end{itemize}
\item An inp$^{2}$-pattern of depth $\kappa$ consists of $\left(\bar{a}_{\alpha},b_{\alpha},\phi_{\alpha}\left(x,y_{\alpha},z_{\alpha}\right)\right)_{\alpha<\kappa}$,
where $\phi_{\alpha}\in L$, $\bar{a}_{\alpha}=\left(a_{\alpha,i}\right)_{i<\omega}$,
and $b_{\alpha}\subseteq\bigcup\left\{ \bar{a}_{\beta}\left|\,\beta<\alpha\right.\right\} $,
such that:

\begin{itemize}
\item $\left(\bar{a}_{\alpha}\right)_{\alpha<\kappa}$ are mutually indiscernible.
\item $\left\{ \phi_{\alpha}\left(x,a_{\alpha,i},b_{\alpha}\right)\left|\, i<\omega\right.\right\} $
is inconsistent for every $\alpha$,
\item $\left\{ \phi_{\alpha}\left(x,a_{\alpha,0},b_{\alpha}\right)\left|\,\alpha<\kappa\right.\right\} $
is consistent.
\end{itemize}
\item The \emph{burden (burden$^{2}$)} of $T$ is the supremum (in Card$^{*}$)
of the depths of $\inp$-patterns (resp. $\inp^{2}$-patterns) with
$x$ a singleton.
\item It is easy to see by compactness that $T$ is $\NTP_{2}$ if and only
if its burden is $<\infty$, equivalently $<\left|T\right|^{+}$.
The same is true for burden$^{2}$, see \cite[Proposition 5.5(viii)]{CheBY}.
\item A theory $T$ is called \emph{strong (strong$^{2}$)} if its burden
$\leq\left(\aleph_{0}\right)_{-}$ (resp. burden$^{2}$$\leq\left(\aleph_{0}\right)_{-}$).
\end{enumerate}
\end{defn}
Strong theories were introduced by Adler \cite{HansBurden} based
on the notion of $\inp$-patterns of Shelah \cite[Ch. III]{MR1083551},
and were further studied in \cite{Chernikov:2012uq} where it was
shown that burden is ``sub-multiplicative''. Strong$^{2}$ theories
were introduced in \cite{CheBY} as a generalization of Shelah's strongly$^{2}$
dependent theories. Of course, every strong$^{2}$ theory is strong,
and every strong theory is $\NTP_{2}$.
\begin{fact}
\label{fac: Burden-is-sub-multiplicative}\cite{Chernikov:2012uq}Burden
is ``sub-multiplicative'': if there is an $\inp$-pattern of depth
$\kappa^{n}$ with $\left|x\right|=n$ then there is an $\inp$-pattern
of depth $\kappa$ with $\left|x\right|=1$. In particular, in a strong
theory there are no $\inp$-patterns of infinite depth with $x$ of
arbitrary finite length (while the definition only requires this for
$\left|x\right|=1$).\end{fact}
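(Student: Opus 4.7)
The plan is to argue by induction on $n$, with the substance lying in the case $n=2$ of the following two-variable lemma: if there is an $\inp$-pattern of depth $\lambda \cdot \mu$ in a variable $x = x_1 x_2$ (with $|x_1|, |x_2|$ of any finite lengths), then there is either an $\inp$-pattern of depth $\lambda$ in $x_1$ or one of depth $\mu$ in $x_2$. Granted this, an $\inp$-pattern of depth $\kappa^n = \kappa^{n-1}\cdot\kappa$ in $|x| = n$, split as $|x_1| = n-1$ and $|x_2| = 1$, yields either an $\inp$-pattern of depth $\kappa^{n-1}$ in $n-1$ variables (to which the inductive hypothesis applies) or one of depth $\kappa$ in a singleton, which is the desired conclusion.

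To prove the two-variable lemma, I would first apply the standard Ramsey-and-compactness extraction argument to assume that the rows $\bar{a}_{\beta,\gamma}$ (re-indexed as $(\beta,\gamma) \in \lambda\times\mu$) are mutually indiscernible. For each column $\beta$, the restricted sub-pattern is consistent along the ``$f\equiv 0$'' path, so one obtains a common $x_1$-witness $b_1^\beta$ together with $x_2$-witnesses $(b_2^{\beta,\gamma})_{\gamma<\mu}$ realizing all formulas of that column at level $i = 0$. Then case-split. In the first case, for cofinally many $\beta$ the ``$x_1$-shadows'' $\exists x_2\,\varphi_{\beta,\gamma}(x_1, x_2, a_{\beta,\gamma,i})$ remain $k$-inconsistent in $i$ (for a fixed $\gamma = \gamma(\beta)$), and a further Ramsey extraction on the $b_1^\beta$ produces an $\inp$-pattern of depth $\lambda$ in $x_1$. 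In the second case, there is some column $\beta$ in which the substitution $x_1 \mapsto b_1^\beta$ preserves $k$-inconsistency in $i$ for every $\gamma < \mu$; then mutual indiscernibility of the rows within that column gives an $\inp$-pattern of depth $\mu$ in $x_2$ from the fibre formulas $\varphi_{\beta,\gamma}(b_1^\beta, x_2, a_{\beta,\gamma,i})$.

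The hard part is the bookkeeping in the case split: verifying both the row-inconsistency and path-consistency conditions of the $\inp$-pattern extracted in each case, and justifying that the two alternatives exhaust the possibilities after suitable further extraction (one uses indiscernibility to promote ``for some $\gamma$'' into ``for a uniform choice of $\gamma$'', and to upgrade consistency along a single path into consistency along all paths of the new pattern). The full execution is carried out in \cite{Chernikov:2012uq}.
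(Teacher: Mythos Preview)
The paper does not prove this statement at all; it is recorded as a Fact with a citation to \cite{Chernikov:2012uq}, and no argument is given. Your sketch of the inductive reduction to a two-variable lemma and the ensuing case split is the standard route taken in that reference, and since you likewise defer the detailed execution to \cite{Chernikov:2012uq}, there is nothing further to compare.
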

\begin{problem}
Does the same hold for $\inp^{2}$-patterns?\end{problem}
\begin{rem}

\begin{enumerate}
\item For $T$ simple, being strong corresponds to the fact that every finitary
type has finite weight \cite{HansBurden}. Also, every supersimple
theory is strong$^{2}$ \cite[Section 5]{CheBY}.
\item In \cite{ShelahDependentCont}, Shelah introduced strongly and strongly$^{2}$
dependent theories. For strong dependence, the definition is very
similar to the one given: one asks that there is no pattern $\left(\bar{a}_{\alpha},\varphi_{\alpha}(x,y_{\alpha})\right)_{\alpha<\omega}$
as above such that for every function $f:\omega\to\omega$, $\left\{ \varphi_{\alpha}\left(x,a_{\alpha,\beta}\right)^{\mbox{if }\beta=f\left(\alpha\right)}\left|\,\alpha<\kappa\right.\right\} $
is consistent. One can easily show that $T$ is strongly dependent
if and only if it is strong and $\NIP$.
\item The definition of strongly$^{2}$ dependent is again similar to the
definition of strong$^{2}$, allowing parameters from other rows in
the definition of strong dependence. For $T$ dependent, being strong$^{2}$
is the same as being strongly$^{2}$ dependent (sometimes called strongly$^{+}$
dependent)\cite[Section 5]{CheBY}.
\item There are stable strong theories which are not strong$^{2}$ and there
are stable strong$^{2}$ theories which are not superstable \cite[Section 5]{CheBY}.
\end{enumerate}
\end{rem}

\subsection{Strong groups and fields}

The following are taken from \cite[Proposition 3.11, Corollary 3.12]{Kaplan:2011xy}
with some easy modifications:
\begin{prop}
\label{prop:NormalIntersection} Let $G$ be a type-definable group
and $G_{i}\leq G$ type-definable \uline{normal} subgroups for
$i<\omega$.
\begin{enumerate}
\item If $T$ is strong, then there is some $i_{0}$ such that $\left[\bigcap_{i\neq i_{0}}G_{i}:\bigcap_{i<\omega}G_{i}\right]<\infty$.
\item If $T$ is of finite burden, then there is some $n\in\omega$ and
$i_{0}<n$ such that $\left[\bigcap_{i\neq i_{0},i<n}G_{i}:\bigcap_{i<n}G_{i}\right]<\infty$. 
\end{enumerate}
\end{prop}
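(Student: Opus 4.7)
The plan is to follow the template of Lemma \ref{prop: Intersection of normal subgroups in NTP2}, contradicting strongness (resp.\ a finite burden bound $N$) by exhibiting an $\inp$-pattern of depth $\omega$ (resp.\ $N+1$). The new ingredient is that each $G_i$ is only type-definable, say $G_i = \bigcap_n \varphi_{i,n}(x;a_i)$ with $\varphi_{i,n+1} \vdash \varphi_{i,n}$, so a Ramsey/indiscernibility extraction is needed to collapse each row of the pattern to a single formula.

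For part (1), I would assume toward a contradiction that $[\bigcap_{k\neq i} G_k : \bigcap_k G_k] = \infty$ for every $i<\omega$ and, for each $i$, pick $(b_{i,j})_{j<\omega}$ in $\bigcap_{k\neq i} G_k$ representing pairwise distinct cosets of $\bigcap_k G_k$. Since $\bigl(\bigcap_{k\neq i} G_k\bigr) \cap G_i = \bigcap_k G_k$, the cosets $b_{i,j} G_i$ are pairwise disjoint. Exactly as in the cited lemma, for any $f:\omega\to\omega$ and any finite $F\subseteq\omega$ the product $\prod_{i\in F} b_{i,f(i)}$ lies in $\bigcap_{i\in F} b_{i,f(i)} G_i$ by normality, so $\bigcap_{i<\omega} b_{i,f(i)} G_i$ is a consistent partial type by compactness. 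Next I would replace each row $(b_{i,j})_{j<\omega}$ by an $a_i$-indiscernible sequence of the same EM-type (row by row, over the other chosen parameters), apply compactness to the disjointness $b_{i,0} G_i \cap b_{i,1} G_i = \emptyset$ to obtain some $n_i$ with $b_{i,0}\cdot \varphi_{i,n_i}(\cdot;a_i) \cap b_{i,1}\cdot \varphi_{i,n_i}(\cdot;a_i) = \emptyset$, and invoke indiscernibility to extend this to all pairs. Setting $\psi_i(x;y) := \exists w(\varphi_{i,n_i}(w;a_i) \wedge x = y\cdot w)$, the array $(b_{i,j})_{i,j}$ would be an $\inp$-pattern of depth $\omega$, contradicting strongness.

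For part (2), I would fix a finite bound $N$ on the burden and, assuming the conclusion fails, run exactly the same construction on the first $N+1$ rows to produce an $\inp$-pattern of depth $N+1$, contradicting the burden bound.

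The main obstacle I expect is the indiscernibility extraction: disjointness of the cosets $b_{i,j} G_i$ is a priori witnessed pair by pair by different levels of the partial type defining $G_i$, and the delicate verification is that the row-by-row extraction preserves the vertical consistency $\bigcap_i b_{i,f(i)} G_i \neq \emptyset$. This should go through because one only ever replaces elements of a row by others realizing the same type over the remaining rows' parameters, so the partial-type consistency inherited from the product argument is preserved.
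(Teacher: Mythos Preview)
Your proposal is correct and follows essentially the same route as the paper: assume failure, choose for each $i$ representatives $b_{i,j}\in\bigcap_{k\neq i}G_k$ of distinct $G_i$-cosets, extract (mutually) indiscernible rows, pass to a single formula per row by compactness, and witness vertical consistency by the product $\prod_i b_{i,f(i)}$ using normality. The paper streamlines two points you flag as delicate: it verifies vertical consistency \emph{after} the Ramsey extraction (membership in $\bigcap_{k\neq i}G_k$ is part of the type over the defining parameters, hence preserved, so the product argument applies directly to the new array), and it explicitly passes to a formula $\psi_i'$ in the type of $G_i$ with $\psi_i'(x)\wedge\psi_i'(y)\vdash\psi_i(x^{-1}y)$, taking $\varphi_i(x,y)=\psi_i'(x^{-1}y)$ as the row formula, which is exactly what is needed to turn ``$b_{i,0}G_i\cap b_{i,1}G_i=\emptyset$'' into $2$-inconsistency of the formula-translates---a step your sketch leaves implicit; for (2) the paper likewise just cites sub-multiplicativity of burden.
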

\begin{proof}
(1) Assume not. Then, for each $i<\omega$, we have an indiscernible
sequence $\left(a_{i,j}\right)_{j<\omega}$ (over the parameters defining
all the groups) such that $a_{i,j}\in\bigcap_{k\neq i}G_{k}$ and
for $j_{1}<j_{2}<\omega$, $a_{i,j_{1}}^{-1}\cdot a_{i,j_{2}}\notin G_{i}$.
By compactness there is a formula $\psi_{i}\left(x\right)$ in the
type defining $G_{i}$ such that $\neg\psi_{i}\left(a_{i,j_{1}}^{-1}\cdot a_{i,j_{2}}\right)$
holds (by indiscernibility it is the same for all $j_{1}<j_{2}$).
We may assume, applying Ramsey, that the sequences $\left\langle \left(a_{i,j}\right)_{j<\omega}\left|\, i<\omega\right.\right\rangle $
are mutually indiscernible. Let $\psi_{i}'$ be another formula in
the type defining $G_{i}$ such that $\psi_{i}'\left(x\right)\wedge\psi_{i}'\left(y\right)\vdash\psi_{i}\left(x^{-1}\cdot y\right)$.
Let $\varphi_{i}\left(x,y\right)=\psi'_{i}\left(x^{-1}\cdot y\right)$. 

Now we check that the set $\left\{ \varphi_{i}\left(x,a_{i,0}\right)\left|\, i<n\right.\right\} $
is consistent for each $n<\omega$. Let $c=a_{0,0}\cdot\ldots\cdot a_{n-1,0}$
(the order does not really matter, but for the proof it is easier
to fix one). So $\varphi_{i}\left(c,a_{i,0}\right)$ holds if and
only if $\psi_{i}'\left(a_{n-1,0}^{-1}\cdot\ldots\cdot a_{i,0}^{-1}\cdot\ldots\cdot a_{0,0}^{-1}\cdot a_{i,0}\right)$
holds. But since $G_{i}$ is normal, $a_{i,0}^{-1}\cdot\ldots\cdot a_{0,0}^{-1}\cdot a_{i,0}\in G_{i}$,
so the entire product is in $G_{i}$, so $\varphi_{i}\left(c,a_{i,0}\right)$
holds. On the other hand, if for some $c'$, $\varphi_{i}\left(c',a_{i,0}\right)\land\varphi_{i}\left(c',a_{i,1}\right)$
holds, then $\psi_{i}\left(a_{i,0}^{-1}\cdot a_{i,1}\right)$ holds
--- contradiction. So the rows are inconsistent which contradicts
strength.

(2) Follows from the proof of (1) using Fact \ref{fac: Burden-is-sub-multiplicative}.\end{proof}
\begin{cor}
\label{cor:almost all}If $G$ is an abelian group type-definable
in a strong theory and $S\subseteq\omega$ is an infinite set of pairwise
co-prime numbers, then for almost all (i.e. for all but finitely many)
$n\in S$, $\left[G:G^{n}\right]<\infty$. In particular, if $K$
is a definable field in a strong theory, then for almost all primes
$p$, $\left[K^{\times}:\left(K^{\times}\right)^{p}\right]<\infty$.\end{cor}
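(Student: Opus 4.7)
The plan is to apply Proposition \ref{prop:NormalIntersection}(1) to the type-definable normal subgroups $G^n$ of $G$ (normality is automatic since $G$ is abelian) and argue by contradiction. Suppose there are infinitely many $n \in S$ with $[G:G^n] = \infty$, and enumerate such $n$'s as $(n_i)_{i<\omega}$. Applied to $(G^{n_i})_{i<\omega}$, Proposition \ref{prop:NormalIntersection}(1) yields some $i_0 < \omega$ such that $[A:B] < \infty$, where $A := \bigcap_{i \neq i_0} G^{n_i}$ and $B := \bigcap_{i<\omega} G^{n_i}$. The goal is then to establish the identity $[G:G^{n_{i_0}}] = [A:B]$, which contradicts the assumption $[G:G^{n_{i_0}}] = \infty$.

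The key input is the pairwise coprimality of the $n_i$'s. For any finite $F \subseteq \omega \setminus \{i_0\}$, set $N_F = \prod_{i \in F} n_i$; since $\gcd(n_{i_0}, N_F) = 1$, Bézout gives integers $u,v$ with $u n_{i_0} + v N_F = 1$, and hence for every $g \in G$,
\[
g = (g^u)^{n_{i_0}} \cdot (g^v)^{N_F} \in G^{n_{i_0}} \cdot \bigcap_{i \in F} G^{n_i}.
\]
Fixing $g$, this shows that the partial type in the variable $h$ asserting ``$h \in A$ and $g h^{-1} \in G^{n_{i_0}}$'' is finitely satisfiable; by saturation of the monster it is realized by some $h \in A$. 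Consequently, the inclusion $A \hookrightarrow G$ induces a surjection $A \twoheadrightarrow G/G^{n_{i_0}}$ with kernel $A \cap G^{n_{i_0}} = B$, which gives $[G:G^{n_{i_0}}] = [A:B] < \infty$, the desired contradiction.

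The main point to verify is the appeal to saturation, but each finite approximation of the partial type reduces to a single Bézout identity between the two coprime integers $n_{i_0}$ and $N_F$, so no hypothesis beyond pairwise coprimality of the elements of $S$ is used. The ``in particular'' statement then follows by applying the first part to the definable abelian group $G = K^\times$ together with the infinite pairwise coprime set $S$ consisting of all primes.
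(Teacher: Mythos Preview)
Your proof is correct and follows essentially the same route as the paper: both reduce to a countable family of bad $n$'s, apply Proposition \ref{prop:NormalIntersection}(1) to the subgroups $G^{n_i}$, and then use B\'ezout together with compactness/saturation to show that the natural map $\bigcap_{i\neq i_0}G^{n_i}\to G/G^{n_{i_0}}$ is onto with kernel $\bigcap_i G^{n_i}$. The only difference is cosmetic --- you phrase the surjectivity as realizing a partial type in $h$ by saturation, whereas the paper says ``by compactness we may assume $S$ is finite'' --- but these are the same argument.
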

\begin{proof}
Let $K\subseteq S$ be the set of $n\in S$ such that $\left[G:G^{n}\right]<\infty$.
If $S\backslash K$ is infinite, we replace $S$ with $S\backslash K$.

For $i\in S$, let $G_{i}=G^{i}$ (so it is type-definable). By Proposition
\ref{prop:NormalIntersection}, there is some $n$ such that $\left[\bigcap_{i\neq n}G_{i}:\bigcap_{i\in S}G_{i}\right]<\infty$.
Now it is enough to show that $\bigcap_{i\neq n}G_{i}/\bigcap_{i\in S}G_{i}\cong G/G_{n}$.
For this we show that the natural map $\bigcap_{i\neq n}G_{i}\to G/G_{n}$
is onto. To show that, we may assume by compactness that $S$ is finite.
Let $r=\prod S\backslash\left\{ n\right\} $, then since $r$ and
$n$ are co-prime, there are some $a,b\in\mathbb{Z}$ such that $ar+bn=1$
so for any $g\in G$, $g^{ar}\equiv g\pmod{G_{n}}$, and we are done. 
\end{proof}
The proof of the following proposition is taken from \cite[Proposition 2.3]{MR2787692}
so we observe that it goes through in larger generality.
\begin{prop}
\label{prop: Any-infinite-strong-field-is-perfect}Any infinite strong
field is perfect.\end{prop}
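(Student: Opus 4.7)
The plan is to adapt Wagner's classical proof that infinite supersimple fields are perfect, using the chain condition for type-definable normal subgroups in strong theories (Proposition~\ref{prop:NormalIntersection}) in place of $\operatorname{SU}$-rank additivity. Since every field of characteristic zero is perfect, I assume $\operatorname{char}(K)=p>0$ and suppose toward contradiction that $K^{p}\subsetneq K$. The Frobenius $F(x)=x^{p}$ is then a definable injective additive endomorphism of $(K,+)$ with image $K^{p}$, and restricts to a definable isomorphism $K^{\times}\to (K^{\times})^{p}$ of multiplicative groups.

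The key step is to establish $[K^{\times}:(K^{\times})^{p}]<\infty$. In the supersimple case this is immediate: $\operatorname{SU}$-rank additivity combined with the Frobenius isomorphism $K^{\times}\cong (K^{\times})^{p}$ forces $\operatorname{SU}(K^{\times}/(K^{\times})^{p})=0$, so the quotient is finite. In our strong setting I instead invoke Proposition~\ref{prop:NormalIntersection}(1), applied to a suitable family of type-definable normal subgroups of $K^{\times}$ built from the iterated Frobenius images $(K^{\times})^{p^{n}}$, in the spirit of the proof of Corollary~\ref{cor:almost all}. This is the main obstacle of the argument, since one has to extract the finite-index conclusion for the specific prime $p=\operatorname{char}(K)$ rather than merely for almost all primes, so the family of subgroups must be chosen with some care.

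Once $[K^{\times}:(K^{\times})^{p}]=n<\infty$, I decompose $K^{\times}$ into $n$ multiplicative cosets $a_{1}(K^{\times})^{p}\sqcup\cdots\sqcup a_{n}(K^{\times})^{p}$. Reading this additively yields $K=\bigcup_{i=1}^{n}a_{i}K^{p}$, a finite cover of $(K,+)$ by additive subgroups; multiplication by $a_{i}$ is an additive isomorphism of $K^{p}$ onto $a_{i}K^{p}$, so all of these subgroups have the same additive index in $K$. By B.~H.~Neumann's lemma (a group covered by finitely many subgroups must contain one of finite index), this common index is finite: $[K:K^{p}]<\infty$ as abelian groups.

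To conclude, I view $K$ as a vector space over the subfield $K^{p}$. Since $|K^{p}|=|K|$ is infinite, the finite-index condition $[K:K^{p}]<\infty$ is incompatible with $\dim_{K^{p}}(K)\geq 2$ (as then $|K/K^{p}|=|K^{p}|^{\dim-1}$ would be infinite). Hence $\dim_{K^{p}}(K)=1$, i.e., $K=K^{p}$, contradicting the assumption that $K$ is not perfect.
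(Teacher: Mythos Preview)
Your proposal has a genuine gap at precisely the point you flag as ``the main obstacle'': you never actually establish that $[K^{\times}:(K^{\times})^{p}]<\infty$, and the tools you cite do not yield it. Applying Proposition~\ref{prop:NormalIntersection}(1) to the iterated Frobenius images $G_{n}=(K^{\times})^{p^{n}}$ gives nothing, because this is a \emph{descending} chain: $\bigcap_{i\neq i_{0}}G_{i}=\bigcap_{i}G_{i}$ for every $i_{0}$, so the conclusion of the proposition is trivially satisfied with index $1$. The proof of Corollary~\ref{cor:almost all} does not transfer either: there the surjectivity of $\bigcap_{i\neq n}G_{i}\to G/G_{n}$ comes from the Chinese Remainder Theorem, which requires pairwise \emph{coprime} exponents---exactly what fails for the powers $p^{n}$. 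You acknowledge that ``the family of subgroups must be chosen with some care'' but never specify one, and I do not see a choice that works.

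Worse, your ``key step'' is not merely an intermediate lemma but is \emph{equivalent} to the proposition itself: your own post-key-step argument (Neumann's lemma plus the vector-space dimension count) correctly shows that $[K^{\times}:(K^{\times})^{p}]<\infty$ forces $K=K^{p}$, and the converse is trivial. So the proposal reduces the statement to itself and then gestures at Proposition~\ref{prop:NormalIntersection} without a mechanism. The paper's proof avoids the chain condition entirely: it takes $b_{1},b_{2}\in K$ linearly independent over $K^{p}$ and uses the recursion $c_{i}=b_{1}c_{i+1}^{p}+b_{2}a_{i}^{p}$ to encode an entire indiscernible sequence $(a_{i})_{i<\omega}$ into a single element $a=c_{0}$ (uniqueness of the decoding uses exactly the $K^{p}$-linear independence of $b_{1},b_{2}$), and reads off an $\inp$-pattern of infinite depth directly from the resulting algebraicity.
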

\begin{proof}
Let $K$ be of characteristic $p>0$, and suppose that $K^{p}\neq K$.
Then there are $b_{1},b_{2}\in K$ linearly independent over $K^{p}$.
Let $\left\langle a_{i}:\, i\in\mathbb{Q}\right\rangle $ be an indiscernible
non-constant sequence over $b_{1},b_{2}$. By compactness we can find
$a$ and $\left(c_{i}\right)_{i<\omega}$ from $K$ such that $c_{0}=a$
and $c_{i}=b_{1}c_{i+1}^{p}+b_{2}a_{i}^{p}$. Since $b_{1},b_{2}$
are linearly independent over $K^{p}$, we get that $a_{i}\in\dcl\left(b_{1}b_{2}a\right)$
for every $i<\omega$. For each $i<\omega$, let $\varphi_{i}\left(y,b_{1},b_{2},a\right)$
be a formula defining $a_{i}$. We may assume that $\forall x,y_{1},y_{2}\bigwedge_{j=1,2}\varphi_{i}\left(y_{j},b_{1},b_{2},x\right)\to y_{1}=y_{2}$.
So:
\begin{itemize}
\item The sequences $I_{i}=\left(a_{j}\right)_{i-1/2<j<i+1/2}$ where $i<\omega$
are mutually indiscernible over $b_{1},b_{2}$.
\item $\left\{ \varphi_{i}\left(a_{j},b_{1},b_{2},x\right)\left|\, i-1/2<j<i+1/2\right.\right\} $
is $2$-inconsistent. 
\item $\left\{ \varphi_{i}\left(a_{i},b_{1},b_{2},x\right)\left|\, i<\omega\right.\right\} $
is consistent (realized by $a$).
\end{itemize}
Which contradicts strength. \end{proof}
\begin{defn}
A valued field $\left(K,v\right)$ of characteristic $p>0$ is \emph{Kaplansky}
if it satisfies:
\begin{enumerate}
\item The valuation group $\Gamma$ is $p$-divisible.
\item The residue field $k$ is perfect, and does not admit a finite separable
extension whose degree is divisible by $p$.
\end{enumerate}
\end{defn}
\begin{cor}
Every strongly dependent valued field is Kaplansky.\end{cor}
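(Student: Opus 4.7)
The definition of Kaplansky presupposes $\mathrm{char}(K) = p > 0$, so let $(K, v, \Gamma, k)$ be a strongly dependent valued field of characteristic $p$. Condition (1), that $\Gamma$ is $p$-divisible, falls out immediately from the machinery already developed in the paper: strong dependence implies $\NTP_2$, so by Theorem \ref{thm: finitely many A-S extensions in NTP2} $K$ admits only finitely many Artin--Schreier extensions, and then Proposition \ref{prop: finitely many AS extensions implies p-divisible} delivers $p$-divisibility of $\Gamma$.

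For condition (2), the residue field $k$ is interpretable in $(K, v)$, so its induced theory is also strongly dependent, in particular both $\NIP$ and strong. Assuming $k$ is infinite (the relevant case, since a finite $k$ would violate (2) outright), Proposition \ref{prop: Any-infinite-strong-field-is-perfect} gives that $k$ is perfect. For the remaining assertion that $k$ has no finite separable extension of degree divisible by $p$, invoke the Kaplan--Scanlon--Wagner theorem \cite{KaScWa}: any infinite $\NIP$ field admits no Artin--Schreier extensions. Since every finite extension $F/k$ is definable in $k$ (fix a basis and code the multiplication table), it is $\NIP$ and infinite, and so by \cite{KaScWa} has no Artin--Schreier extensions either. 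A standard Galois/Sylow argument now closes the gap: if $L/k$ were a finite separable extension with $p \mid [L:k]$, pass to its Galois closure $\tilde{L}/k$, pick a non-trivial $p$-Sylow subgroup $P \leq \mathrm{Gal}(\tilde{L}/k)$ and a normal subgroup $Q \triangleleft P$ of index $p$; then the intermediate fields $F := \tilde{L}^{P}$ and $F' := \tilde{L}^{Q}$ satisfy $[F':F] = p$ with $F'/F$ Galois, so $F'/F$ is Artin--Schreier in characteristic $p$, contradicting the absence of Artin--Schreier extensions of the finite extension $F$ of $k$.

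The labour divides cleanly along the three hypotheses packaged into ``strongly dependent'': $\NTP_2$ supplies (1) via the paper's chain condition; ``strong'' supplies perfection of $k$ via Proposition \ref{prop: Any-infinite-strong-field-is-perfect}; and $\NIP$, via \cite{KaScWa} applied to every finite extension of $k$, supplies the Galois-structural part of (2). The only subtle point, and the main potential obstacle, is the exclusion of a finite residue field, which is needed for (2) to hold as stated but is not directly addressed by any of the three inputs above; one expects this case to be ruled out by an argument in the spirit of Corollary \ref{cor: F_p((t)) has TP2}, producing a witness to $\mathrm{TP}_2$ (and in particular defeating strong dependence) when $k$ is finite.
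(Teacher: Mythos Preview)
Your argument tracks the paper's proof closely: the paper's one-line proof cites Proposition~\ref{prop: finitely many AS extensions implies p-divisible} for condition~(1), Proposition~\ref{prop: Any-infinite-strong-field-is-perfect} for perfection of $k$, and \cite[Corollary~4.4]{KaScWa} for the absence of finite separable extensions of $k$ of degree divisible by $p$ --- the last being precisely the Galois/Sylow reduction you spell out, applied once one knows that $k$ (and each of its finite extensions) is infinite $\NIP$ and hence Artin--Schreier closed.

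You are right to isolate the finiteness of $k$ as the one genuine obstruction, and the paper's terse proof is silent on this point as well. However, your proposed fix is off target. An argument ``in the spirit of Corollary~\ref{cor: F_p((t)) has TP2}'' --- i.e.\ producing a $\TP_{2}$ witness from a finite residue field --- is not known to work: the paper itself records exactly this as an open question of Hrushovski in the final section (whether an $\NTP_{2}$ Henselian valued field of positive characteristic must have infinite residue field). What actually closes the gap is \cite[Proposition~5.3]{KaScWa}, quoted as a Fact later in the paper: any $\NIP$ valued field of characteristic $p>0$ has residue field containing $\Ff_{p}^{\alg}$, hence infinite. So it is the $\NIP$ hypothesis, not $\NTP_{2}$, that dispatches the finite-residue case; once $k$ is known to be infinite, your argument (equivalently the paper's) goes through verbatim.
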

\begin{proof}
Combining Proposition \ref{prop: Any-infinite-strong-field-is-perfect},
Proposition \ref{prop: finitely many AS extensions implies p-divisible}
and \cite[Corollary 4.4]{KaScWa}.
\end{proof}

\subsection{Strong$^{2}$ theories}

The following is just a repetition of \cite[Proposition 2.5]{Kaplan:2011xy}:
\begin{prop}
\label{prop:noInfiniteChain}Suppose $T$ is strong$^{2}$, then it
is impossible to have a sequence of type-definable groups $\left\langle G_{i}\left|\, i<\omega\right.\right\rangle $
such that $G_{i+1}\leq G_{i}$ and $\left[G_{i}:G_{i+1}\right]=\infty$.\end{prop}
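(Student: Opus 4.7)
The plan is to produce an $\inp^{2}$-pattern of depth $\omega$ from the putative chain and thereby contradict $\mbox{burden}^{2} \leq (\aleph_{0})_{-}$. Assume for contradiction that $G_{0} \geq G_{1} \geq \dots$ is a descending chain of type-definable groups with $[G_{i}:G_{i+1}] = \infty$ for all $i$. Fix a small parameter set $A$ over which all $G_{i}$ are defined and write $p_{i}(x)$ for the partial type over $A$ defining $G_{i}$.

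First I would assemble coset representatives into a mutually indiscernible array. Since $[G_{i}:G_{i+1}] = \infty$, for each $i$ there is an $\omega$-sequence in $G_{i}$ whose elements lie in pairwise distinct $G_{i+1}$-cosets; by left-translating we may assume the sequence starts with the identity $e$. Packing these sequences as the rows of a large array and applying the standard Ramsey-plus-compactness extraction for mutually indiscernible arrays, I obtain an array $(a_{i,j})_{i,j<\omega}$ with $a_{i,0} = e$, $a_{i,j} \in G_{i}$, the rows $\bar{a}_{i} = (a_{i,j})_{j<\omega}$ mutually $A$-indiscernible, and still $a_{i,j_{1}}^{-1} a_{i,j_{2}} \notin G_{i+1}$ whenever $j_{1} \neq j_{2}$. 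The last property survives extraction because for each $i$ one can fix in advance a single formula $\theta_{i} \in p_{i+1}$ with $\neg \theta_{i}(a_{i,0}^{-1} a_{i,1})$ on the initial array; then $\neg \theta_{i}(a_{i,j}^{-1} a_{i,k})$ sits in the EM-type of row $i$.

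Next I would package each row into a single formula. Because $G_{i+1}$ is a group, $p_{i+1}(x) \cup p_{i+1}(y) \vdash \theta_{i}(x y^{-1})$, so by compactness there exists $\psi_{i} \in p_{i+1}$ with $\psi_{i}(x) \wedge \psi_{i}(y) \vdash \theta_{i}(x y^{-1})$. Set $\varphi_{i}(x, y) = \psi_{i}(y^{-1} x)$; no parameters from earlier rows are needed, so the $\inp^{2}$-parameters $b_{i}$ may be taken empty. The resulting $\left(\bar{a}_{i}, \emptyset, \varphi_{i}\right)_{i<\omega}$ is then an $\inp^{2}$-pattern of depth $\omega$: mutual indiscernibility is built in; each row is $2$-inconsistent, since a simultaneous realisation $c$ of $\varphi_{i}(x, a_{i,0})$ and $\varphi_{i}(x, a_{i,1})$ would give $\psi_{i}(c) \wedge \psi_{i}(a_{i,1}^{-1} c)$, whence $\theta_{i}\bigl(c \cdot (a_{i,1}^{-1} c)^{-1}\bigr) = \theta_{i}(a_{i,1})$, contradicting the choice of $\theta_{i}$; and the zero-th column $\{\psi_{i}(x) : i < \omega\}$ is realised by $x = e \in \bigcap_{i} G_{i+1}$.

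The main subtlety is a bookkeeping issue about the length of $x$: the groups naturally sit inside $\mathfrak{C}^{n}$, whereas strong$^{2}$ forbids depth-$\omega$ $\inp^{2}$-patterns only when $|x| = 1$, and sub-multiplicativity for $\inp^{2}$-patterns is precisely the open Problem stated just before this subsection. Since the proposition is a direct transcription of \cite[Proposition 2.5]{Kaplan:2011xy}, one invokes the same reduction used there --- either reducing the chain to one in a single coordinate, or, in the strongly$^{2}$ dependent (\emph{i.e.}\ NIP) setting where that proof originates, appealing to sub-additivity of dp-rank. This reduction is the one step that demands more than the combinatorial construction above.
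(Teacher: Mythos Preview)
Your argument has a genuine gap at the extraction step. You assert that after Ramsey--compactness extraction the array can be taken mutually $A$-indiscernible \emph{and} still satisfy $a_{i,0}=e$. This is impossible: the identity $e$ is $A$-definable, so any $A$-indiscernible sequence beginning with $e$ is constant, contradicting the requirement that the $a_{i,j}$ lie in pairwise distinct $G_{i+1}$-cosets. Once $a_{i,0}=e$ is lost, your consistency witness for the first column evaporates: you would need some $c$ with $a_{i,0}^{-1}c$ satisfying $\psi_{i}$ for every $i$, i.e.\ $c\in\bigcap_{i}a_{i,0}G_{i+1}$, and there is no reason for these cosets (with unrelated $a_{i,0}\in G_{i}$) to have a common point. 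Indeed, note that with $b_{i}=\emptyset$ your pattern is actually an $\inp$-pattern, so a correct version of your argument would prove the proposition already for \emph{strong} theories, without normality --- compare Proposition~\ref{prop:NormalIntersection}, where normality is essential for the strong case. That discrepancy is a red flag.

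The paper's proof resolves exactly this issue by exploiting the extra parameters allowed in an $\inp^{2}$-pattern. One does \emph{not} normalise the first column to $e$; instead one sets $b_{i}=a_{0,0}\cdot a_{1,0}\cdots a_{i-1,0}$ (a finite product of entries from earlier rows, which is permitted for $b_{i}$), and takes $\varphi_{i}(x,y,z)=\xi_{i}(y^{-1}z^{-1}x)$ with $\xi_{i}$ chosen so that $\xi_{i}(u)\wedge\xi_{i}(v)\vdash\psi_{i}(uv^{-1})$. The finite partial column $\{\varphi_{i}(x,a_{i,0},b_{i}):i<i_{0}\}$ is then witnessed by $c=b_{i_{0}}$, since $a_{i,0}^{-1}b_{i}^{-1}b_{i_{0}}=a_{i+1,0}\cdots a_{i_{0}-1,0}\in G_{i+1}$; row inconsistency follows as in your computation. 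The nesting $G_{i+1}\leq G_{i}$ is what makes this telescoping product land in $G_{i+1}$, and it is precisely the use of the $b_{i}$'s that distinguishes the strong$^{2}$ argument from the strong one.

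Your final paragraph correctly flags the $|x|=1$ issue; the paper's proof, like the source it cites, sidesteps this in the same way and does not resolve the open sub-multiplicativity problem for $\inp^{2}$.
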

\begin{proof}
Without loss of generality, we shall assume that all groups are type-definable
over $\emptyset$. Suppose there is such a sequence $\left\langle G_{i}\left|\, i<\omega\right.\right\rangle $.
Let $\left\langle \bar{a}_{i}\left|\, i<\omega\right.\right\rangle $
be mutually indiscernible, where $\bar{a}_{i}=\left\langle a_{i,j}\left|\, j<\omega\right.\right\rangle $,
such that for $i<\omega$, the sequence $\left\langle a_{i,j}\left|\, j<\omega\right.\right\rangle $
is a sequence from $G_{i}$ (in $\C$) such that $a_{i,j'}^{-1}\cdot a_{i,j}\notin G_{i+1}$
for all $j<j'<\omega$. We can find such an array because of our assumption
and Ramsey.

For each $i<\omega$, let $\psi_{i}\left(x\right)$ be in the type
defining $G_{i+1}$ such that $\neg\psi_{i}\left(a_{i,j'}^{-1}\cdot a_{i,j}\right)$
for $j'<j$. By compactness, there is a formula $\xi_{i}\left(x\right)$
in the type defining $G_{i+1}$ such that for all $a,b\in\C$, if
$\xi_{i}\left(a\right)\land\xi_{i}\left(b\right)$ then $\psi_{i}\left(a\cdot b^{-1}\right)$
holds. Let $\varphi_{i}\left(x,y,z\right)=\xi_{i}\left(y^{-1}\cdot z^{-1}\cdot x\right)$.
For $i<\omega$, let $b_{i}=a_{0,0}\cdot\ldots\cdot a_{i-1,0}$ (so
$b_{0}=1$). 

Let us check that the set $\left\{ \varphi_{i}\left(x,a_{i,0},b_{i}\right)\left|\, i<\omega\right.\right\} $
is consistent. Let $i_{0}<\omega$, and let $c=b_{i_{0}}$. Then for
$i<i_{0}$, $\varphi_{i}\left(c,a_{i,0},b_{i}\right)$ holds if and
only if $\xi_{i}\left(a_{i+1,0}\cdot\ldots\cdot a_{i_{0}-1,0}\right)$
holds, but the product $a_{i+1,0}\cdot\ldots\cdot a_{i_{0}-1,0}$
is an element of $G_{i+1}$ and $\xi_{i}$ is in the type defining
$G_{i+1}$, so $\varphi_{i}\left(c,a_{i,0},b_{i}\right)$ holds.  Now,
if $\varphi_{i}\left(c',a_{i,0},b_{i}\right)\wedge\varphi_{i}\left(c',a_{i,0},b_{i}\right)$
holds for some $c'$, then $\xi_{i}\left(a_{i,0}^{-1}b_{i}^{-1}c'\right)$
and $\xi_{i}\left(a_{i,1}^{-1}b_{i}^{-1}c'\right)$ hold, so also
$\psi_{i}\left(a_{i,0}^{-1}a_{i,1}\right)$ holds --- a contradiction.
So the rows are inconsistent, contradicting strength$^{2}$. 
\end{proof}
We also get (exactly as \cite[Proposition 2.6]{Kaplan:2011xy}):
\begin{cor}
Assume $T$ is strong$^{2}$. If $G$ is a type-definable group and
$h$ is a definable homomorphism $h:G\to G$ with finite kernel then
$h$ is almost onto $G$, i.e., the index $\left[G:h\left(G\right)\right]$
is bounded (i.e. $<\infty$). If $G$ is definable, then the index
must be finite.
\end{cor}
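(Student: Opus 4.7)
The plan is to iterate $h$ and produce an infinite strictly descending chain of (type-definable) subgroups of $G$ with infinite indices, which will contradict Proposition \ref{prop:noInfiniteChain}. Suppose for contradiction that $[G:h(G)]$ is unbounded, and set $G_i:=h^i(G)$, giving a chain $G=G_0\supseteq G_1\supseteq G_2\supseteq\ldots$.

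First I would check that each $\ker h^i$ is finite. The map $x\mapsto h^i(x)$ sends $\ker h^{i+1}$ into $\ker h$ with kernel $\ker h^i$, so $|\ker h^{i+1}|\leq |\ker h|\cdot|\ker h^i|$, hence $|\ker h^i|\leq |\ker h|^i$ by induction. Consequently $h$ restricts to a surjection $G_i\to G_{i+1}$ with finite kernel $G_i\cap\ker h$, and $G_{i+1}\cong G_i/(G_i\cap\ker h)$ as hyperdefinable groups; since the kernel is algebraic, this identifies each $G_{i+1}$ with a type-definable subgroup of $G$ (the quotient of a type-definable group by a finite normal subgroup).

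The main step is to propagate infinite index down the chain. Define $\bar h_i:G_{i-1}/G_i\to G_i/G_{i+1}$ by $gG_i\mapsto h(g)G_{i+1}$; this is well-defined since $h(G_i)=G_{i+1}$. Its fibers have size at most $|\ker h|$: if $h(g)G_{i+1}=h(g')G_{i+1}$ then $h(g^{-1}g')\in h(G_i)$, so $g^{-1}g'\in (\ker h)\cdot G_i$, and there are at most $|\ker h|$ cosets of $G_i$ in $(\ker h)\cdot G_i$. Hence $[G_{i-1}:G_i]=\infty$ forces $[G_i:G_{i+1}]=\infty$, and induction from $[G_0:G_1]=\infty$ yields the chain contradicting Proposition \ref{prop:noInfiniteChain}. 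For the last clause, if $G$ is definable then $h(G)$ is definable, and a definable subgroup of a definable group of bounded index has finite index by saturation, so boundedness automatically upgrades to finiteness.

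The main technical wrinkle will be verifying that each $G_i$ genuinely counts as type-definable for the purposes of Proposition \ref{prop:noInfiniteChain}: images of type-definable sets under definable maps are only $\bigvee$-definable in general, but finiteness of $\ker h$ lets us present $G_{i+1}$ as the type-definable quotient $G_i/(G_i\cap\ker h)$, so the hypotheses of Proposition \ref{prop:noInfiniteChain} apply as stated.
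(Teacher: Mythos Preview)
Your overall strategy---iterate $h$, get a descending chain $G_i=h^i(G)$ with infinite successive indices, and invoke Proposition~\ref{prop:noInfiniteChain}---is exactly the intended one (the paper simply defers to \cite[Proposition 2.6]{Kaplan:2011xy}). The propagation of infinite index via the map $\bar h_i:G_{i-1}/G_i\to G_i/G_{i+1}$ with bounded fibers is correct.

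The only genuine issue is your last paragraph. You are right that images of type-definable sets are in general only $\bigvee$-definable, and this is the point that needs care. But your proposed fix---replacing $G_{i+1}$ by the quotient $G_i/(G_i\cap\ker h)$---does not do the job: that quotient is a hyperimaginary sort, not a type-definable subgroup of $G$, so the $G_i$'s no longer sit inside one another and Proposition~\ref{prop:noInfiniteChain} (which asks for a chain $G_{i+1}\leq G_i$) does not literally apply.

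The clean resolution is to show directly that $h(G)$ is type-definable as a subset of the monster. Write $n=|\ker(h|_G)|$. Since no $n{+}1$ distinct elements of $G$ have the same $h$-image, compactness produces a \emph{definable} $X'\supseteq G$ on which every $h$-fiber has size at most $n$. For $y\in h(G)$ the fiber $h^{-1}(y)\cap G$ is a coset of $\ker h$, hence has exactly $n$ elements, and since $G\subseteq X'$ this forces $h^{-1}(y)\cap X'=h^{-1}(y)\cap G$. Thus
\[
h(G)=\bigl\{\,y:\ |h^{-1}(y)\cap X'|=n\ \wedge\ h^{-1}(y)\cap X'\subseteq G\,\bigr\}.
\]
The first condition is definable; the second is the conjunction over the formulas $\phi_\alpha$ cutting out $G$ of the definable condition $\forall x\,(x\in X'\wedge h(x)=y\to\phi_\alpha(x))$, hence type-definable. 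So $h(G)$ is type-definable, and inductively each $G_i=h^i(G)$ is as well (using that $\ker h^i$ is finite, which you already checked). With this in place your chain argument goes through verbatim.
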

Theorem \ref{Thm:Weak-Baldwin-Saxl} holds for type-definable subgroups
without the normality assumption.
\begin{thm}
\label{thm: without normality in strong^2}Let $G$ be strong$^{2}$
and $\left\{ \varphi\left(x,a\right)\left|\, a\in C\right.\right\} $
be a family of definable subgroups of $G$. Then there is some $k\in\omega$
such that for every finite $C'\subseteq C$ there is some $C_{0}\subseteq C'$
with $\left|C_{0}\right|\leq k$ and such that 
\[
\left[\bigcap_{a\in C_{0}}\varphi\left(x,a\right):\bigcap_{a\in C'}\varphi\left(x,a\right)\right]<\infty\mbox{.}
\]
\end{thm}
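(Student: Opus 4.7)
I propose to adapt the proof of Theorem \ref{Thm:Weak-Baldwin-Saxl}, replacing the normality-dependent Corollary \ref{cor: finite intersection} by the chain condition of Proposition \ref{prop:noInfiniteChain}, which is available in strong$^{2}$ theories with no normality assumption. For a finite $F \subseteq C$ write $H_F := \bigcap_{a \in F}\varphi(x,a)$, a definable subgroup of $G$ (with $H_\emptyset = G$). Suppose for contradiction that no uniform $k$ as in the statement works; then for every $k<\omega$ there is some finite $C' \subseteq C$ such that $[H_{C_0}:H_{C'}]=\infty$ for every $C_0 \subseteq C'$ with $|C_0|\le k$.

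Fix such a $C'$ and run the greedy procedure: set $F_0=\emptyset$, and at stage $j$, if some $a\in C'\setminus F_j$ has $[H_{F_j}:H_{F_j\cup\{a\}}]=\infty$, let $F_{j+1}=F_j\cup\{a\}$; otherwise stop with $F_m:=F_j$. I claim $|F_m|>k$. Termination at $F_m$ means $[H_{F_m}:H_{F_m\cup\{a\}}]<\infty$ for every $a\in C'\setminus F_m$. For any $A$ with $F_m\subseteq A\subsetneq C'$ and $a \in C'\setminus A$, one checks that $xH_{A\cup\{a\}} = H_A\cap xH_{F_m\cup\{a\}}$ for $x \in H_A$, so the map $xH_{A\cup\{a\}}\mapsto xH_{F_m\cup\{a\}}$ is a well-defined injection of coset spaces, yielding $[H_A:H_{A\cup\{a\}}]\le[H_{F_m}:H_{F_m\cup\{a\}}]<\infty$. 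Multiplicativity of index along a chain $F_m\subseteq F_m\cup\{a_1\}\subseteq\ldots\subseteq C'$ then gives $[H_{F_m}:H_{C'}]<\infty$, which by the choice of $C'$ forces $|F_m|>k$.

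Enumerate $F_m = \{a_0,\ldots,a_{m-1}\}$ in the order added; then $[\bigcap_{i<j}\varphi(x,a_i):\bigcap_{i\le j}\varphi(x,a_i)]=\infty$ for each $j<m$. Since $k$ was arbitrary, the partial type $\pi\bigl((z_i)_{i<\omega}\bigr)$ asserting that each $\varphi(x,z_i)$ defines a subgroup of $G$ and $[\bigcap_{i<n}\varphi(x,z_i):\bigcap_{i\le n}\varphi(x,z_i)]\ge N$ for all $n,N<\omega$ is finitely satisfiable. By compactness $\pi$ is realised in $\C$ by some $(a_i)_{i<\omega}$, producing an infinite strictly descending chain of definable subgroups of $G$ with infinite index at each step, contradicting Proposition \ref{prop:noInfiniteChain}.

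The main technical obstacle is the greedy-termination step without normality, since one cannot directly form the quotient group $H_{F_m}/H_{F_m\cup\{a\}}$; the coset-intersection injection above is what preserves the index monotonicity needed to run the induction. Everything after that --- enumerating the chain, writing the failure type, compactness, and invoking Proposition \ref{prop:noInfiniteChain} --- is routine.
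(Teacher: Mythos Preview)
Your argument is correct. The greedy construction terminates as you claim, the index-monotonicity step is the standard fact that for subgroups $K\leq H$ and any subgroup $L$ one has $[K:K\cap L]\leq[H:H\cap L]$ (applied with $H=H_{F_m}$, $K=H_A$, $L=\varphi(x,a)$), and the compactness step is unproblematic since ``index $\geq N$'' is first-order in the parameters.

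The paper takes a shorter and more modular route: rather than running a separate greedy-plus-compactness argument, it simply observes that the analogue of Lemma~\ref{prop: Intersection of normal subgroups in NTP2} holds without normality once $T$ is strong$^{2}$. Given any countable family $(H_i)_{i<\omega}$ of definable subgroups, the partial intersections $H_i'=\bigcap_{j<i}H_j$ form a descending chain, so Proposition~\ref{prop:noInfiniteChain} yields some $j$ with $[H_j':H_{j+1}']<\infty$; since $H_{\neq j}\leq H_j'$ and $\bigcap_i H_i = H_{\neq j}\cap H_j$, the same index bound gives $[H_{\neq j}:\bigcap_i H_i]<\infty$. From there Corollary~\ref{cor: finite intersection} and Theorem~\ref{Thm:Weak-Baldwin-Saxl} go through verbatim. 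Your approach and the paper's both reduce to Proposition~\ref{prop:noInfiniteChain}; the difference is only in packaging. The paper's version has the advantage of isolating the normality-free analogue of Lemma~\ref{prop: Intersection of normal subgroups in NTP2} as a reusable statement, while yours is self-contained and avoids threading through the intermediate corollary.
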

\begin{proof}
The proof of Theorem \ref{Thm:Weak-Baldwin-Saxl} relied on Proposition
\ref{prop: Intersection of normal subgroups in NTP2}. So we only
need to show that this proposition goes through. Let $H_{i}=\varphi\left(x,a_{i}\right)$
for $i<\omega$. Consider $H_{i}'=\bigcap_{j<i}H_{j}$. At some point
$\left[H'_{j}:H'_{j+1}\right]<\infty$. But then also $\left[H_{\neq j}:\bigcap_{i<\omega}H_{i}\right]<\infty$. 
\end{proof}

\section{Questions, conjectures and further research directions}

\subsection{\label{sub: more NTP2 fields}More pure $\NTP_{2}$ fields}

Recall that a field is pseudo algebraically closed (or $\PAC$) if
every absolutely irreducible variety defined over it has a point in
it. It is well-known \cite{MR1721163} that the theory of a $\PAC$
field is simple if and only if it is bounded (i.e. for any integer
$n$ it has only finitely many Galois extensions of degree $n$).
Moreover, if a $\PAC$ field is unbounded, then it has $\TP_{2}$
\cite[Section 3.5]{ZoeUnboundedPAC}. On the other had, the following
fields were studied extensively:
\begin{enumerate}
\item Pseudo real closed (or $\PRC$) fields: a field $F$ is $\PRC$ if
every absolutely irreducible variety defined over $F$ that has a
rational point in every real closure of $F$, has an $F$-rational
point \cite{MR1071779,MR645909,MR799048}.
\item Pseudo $p$-adically closed (or $\pPC$) fields: a field $F$ is $\pPC$
if every absolutely irreducible variety defined over $F$ that has
a rational point in every $p$-adic closure of $F$, has an $F$-rational
point \cite{MR1006730,MR1087579,MR1133080,MR921990}.\end{enumerate}
\begin{conjecture}
A $\PRC$ field is $\NTP_{2}$ if and only if it is bounded. Similarly,
a $\pPC$ field is $\NTP_{2}$ if and only if it is bounded.
\end{conjecture}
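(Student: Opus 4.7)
The plan is to mimic the PAC template of Chatzidakis and \cite[Section 3.5]{ZoeUnboundedPAC}, treating the two directions and the two classes ($\PRC$ and $\pPC$) in parallel. For the easy direction, assume $F$ is unbounded. Then for some $n$ there exist infinitely many Galois extensions of $F$ of degree $n$, hence infinitely many distinct open index-$n$ normal subgroups of the absolute Galois group $\Gal(F)$. In the PAC setting these are translated into a $\TP_2$ pattern by using norm forms: for a Galois extension $L/F$ of degree $n$, the norm subgroup $N_{L/F}(L^{\times})$ is a definable finite-index subgroup of $F^{\times}$, and independent families of such extensions yield families of definable subgroups violating Theorem \ref{Thm:Weak-Baldwin-Saxl}. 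The same coset-style construction should go through for $\PRC$ and $\pPC$ fields since, by the density theorems of Prestel and of Jarden--Haran--Grob, Galois extensions of a bounded $\PRC$/$\pPC$ field are still controlled by norm-type data on the field itself; thus an unbounded field admits the same array of pairwise non-conjugate Galois extensions, yielding $\TP_2$.

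For the nontrivial direction (bounded implies $\NTP_2$), the plan is to produce a transfer statement analogous to Fact \ref{fac: preserving NTP2}. Concretely, a bounded $\PRC$ (resp.\ $\pPC$) field $F$ carries finitely many orderings $\leq_1,\ldots,\leq_r$ (resp.\ $p$-adic valuations $v_1,\ldots,v_r$), and one has a relative quantifier-elimination of Basarab--Jarden--Prestel / Haran type reducing formulas over $F$ to formulas about the $r$ real (resp.\ $p$-adic) closures together with a bounded Galois datum. The approach would be: take a putative $\TP_2$ array $(a_{i,j})$ for some formula $\varphi(x,y)$ in $F$; decompose $\varphi$ along the quantifier-elimination into coordinates indexed by the finitely many closures; use mutual indiscernibility together with the pigeonhole from finiteness of $r$ to isolate one coordinate on which $\TP_2$ persists; and finally contradict $\NIP$ (in fact dp-minimality) of that real or $p$-adic closure. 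The Galois datum should not obstruct this because boundedness forces it to take only finitely many values on any indiscernible sequence.

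The main obstacle is precisely this decomposition/transfer step, which has no known formulation in a multi-sorted language as clean as Denef--Pas. For $\pPC$ one has several incompatible $p$-adic valuations rather than a single Henselian structure, and the usual Ax--Kochen--Ershov machinery does not directly apply; for $\PRC$ the difficulty is analogous with orderings. A feasible substitute is to argue model-theoretically via a local-global principle: show that forking in $F$ is controlled by forking in the finitely many closures (an analogue of the $\NTP_2$ characterization in \cite{CheHils}), and then lift $\NTP_2$ from the closures, which are dp-minimal. If this local-global principle fails in a subtle way, one may need to enrich the language by predicates for the sites of the orderings/valuations (i.e.\ work in the appropriate Galois--Pas style language) so that such a decomposition becomes literally a quantifier-elimination; this enrichment, and the verification that $\NTP_2$ is preserved under it, is where the real technical work will lie.
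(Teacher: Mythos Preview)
The statement you are attempting to prove is stated in the paper as a \emph{conjecture}; it is not proved there. The paper offers no argument at all for the direction ``bounded $\Rightarrow$ $\NTP_{2}$'' in either the $\PRC$ or the $\pPC$ case, so your outline for that direction cannot be compared to anything in the paper. Your transfer/decomposition strategy is a reasonable plan of attack, and you correctly identify the absence of a Denef--Pas style multi-sorted quantifier elimination as the central obstacle; but as you yourself say, this is where ``the real technical work will lie,'' and nothing in your proposal goes beyond a sketch of intentions. So for that half there is simply no proof yet, in the paper or in your proposal.

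For the direction ``unbounded $\Rightarrow$ $\TP_{2}$,'' the paper does give a short argument in the $\PRC$ case (and says nothing about $\pPC$), but by a different and much cheaper route than yours. Rather than building a $\TP_{2}$ array directly from norm subgroups and invoking Theorem~\ref{Thm:Weak-Baldwin-Saxl}, the paper passes to $L=K(\sqrt{-1})$: since every finite extension of a $\PRC$ field is $\PRC$ and $L$ has no real closures, $L$ is $\PAC$; it remains unbounded; hence by Chatzidakis's result for unbounded $\PAC$ fields $L$ has $\TP_{2}$; and $L$ is interpretable in $K$, so $K$ has $\TP_{2}$. This reduces the $\PRC$ case to the already-known $\PAC$ case in two lines, with no need to analyse norm forms or verify the independence conditions needed to contradict Theorem~\ref{Thm:Weak-Baldwin-Saxl}. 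Your norm-group approach might be made to work and would have the advantage of treating $\pPC$ on the same footing, but as written it is only a heuristic: you would still need to check that the relevant norm subgroups are sufficiently independent to produce an infinite strictly descending chain violating the chain condition, and that is not automatic from unboundedness alone.
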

We remark that if $K$ is an unbounded $\PRC$ field then it has $\TP_{2}$.
Indeed, since $K$ is $\PRC$ then $L=K\left(\sqrt{-1}\right)$ is
$\PAC$ (because every finite extension of a $\PRC$ field is $\PRC$
and $L$ has no real closures at all). By \cite[Remark 16.10.3(b)]{MR2102046}
$L$ is unbounded. And of course, $L$ is interpretable in $K$. But
by the result of Chatzidakis cited above $L$ has $\TP_{2}$, thus
$K$ also has $\TP_{2}$.

\subsection{More valued fields with $\NTP_{2}$}

Is there an analogue of Fact \ref{fac: preserving NTP2} in positive
characteristic? A similar result for $\NIP$ was established in \cite[Corollaire 7.6]{MR1703196}.
\begin{conjecture}
Let $\left(K,v\right)$ be a valued field of characteristic $p>0$,
Kaplansky and algebraically maximal. Then $\left(K,v\right)$ is $\NTP_{2}$
(strong) if and only if $k$ is $\NTP_{2}$ (resp. strong).
\end{conjecture}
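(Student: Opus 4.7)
The plan is to prove the conjecture by an Ax--Kochen--Ershov style transfer, parallel to Fact \ref{fac: preserving NTP2} but using a relative quantifier elimination adapted to positive equicharacteristic. The natural input is Kuhlmann's AKE principle for algebraically maximal Kaplansky fields --- an extension of such fields is elementary iff it is elementary on the residue field and value group sorts --- but for a $\NTP_{2}$ transfer one needs more, namely a resplendent or ``relative'' quantifier elimination in a three-sorted language enriched by the leading-term sorts $RV_{n}$ (or, where available, angular components), analogous to Pas's theorem used in the characteristic $0$ proof of Fact \ref{fac: preserving NTP2}.

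Granting such a QE, I would argue as in the proofs of Fact \ref{fac: preserving NTP2} and its analogues in \cite{Chernikov:2012uq,CheHils}. Starting from a hypothetical $\TP_{2}$-formula in $K$ with a singleton field variable, the QE rewrites it as a boolean combination of formulas on the residue sort, value group sort, and $RV_{n}$-sorts, evaluated at terms of the parameters. Extracting a mutually indiscernible array by Ramsey, the induced arrays in each reduct sort remain mutually indiscernible and row-wise inconsistent. Since pure ordered abelian groups are $\NIP$, hence $\NTP_{2}$, an infinite $\inp$-pattern cannot live purely in $\Gamma$, and by sub-multiplicativity of burden (Fact \ref{fac: Burden-is-sub-multiplicative}) must project onto an $\inp$-pattern in $k$. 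The converse is immediate since $k$ and $\Gamma$ are interpretable in $K$. For the ``strong'' version a finer accounting bounds the burden of $K$ by a finite combination of the burdens of $k$, $\Gamma$, and the $RV_{n}^{\times}$, using the exact sequences $1 \to k^{\times}/(1 + \mathfrak{m}^{n}) \to RV_{n}^{\times} \to \Gamma \to 0$ together with sub-multiplicativity; a cleaner statement of the strong half should read ``$K$ is strong iff $k$ and $\Gamma$ are both strong'', since ordered abelian groups of infinite burden do exist.

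The main obstacle is the first step. Characteristic $p$ AKE is notoriously more delicate than characteristic $0$: wild ramification phenomena must be suppressed, which is precisely what the Kaplansky plus algebraic maximality hypotheses achieve at the level of elementary equivalence, but even so a cross-section or system of angular components is not available in general, so one is forced to work honestly in the $RV$-language. A plausible route is to adapt the $RV$-style QE techniques developed for henselian fields of equicharacteristic $0$ to the Kaplansky setting, verifying that formulas in $K$ are controlled uniformly in $n$ by formulas in $RV_{n}$. Once such a QE is in hand, the combinatorial transfer of $\TP_{2}$-patterns outlined above should be routine; as a sanity check, the conjecture is consistent with Corollary \ref{cor: F_p((t)) has TP2}, since in $\Ff_{p}((t))$ the value group $\Zz$ fails to be $p$-divisible and so the Kaplansky hypothesis is violated, as expected.
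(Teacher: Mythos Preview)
The statement is labelled a \emph{conjecture} in the paper and appears in the section ``Questions, conjectures and further research directions''; the paper offers no proof and does not claim one. There is therefore nothing to compare your attempt against.

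As for the content of your proposal: it is not a proof but a plausible research outline, and you yourself identify the gap. The entire argument is conditional on a relative quantifier elimination (in an $RV$-language or with angular components) for algebraically maximal Kaplansky fields of positive equicharacteristic, which you do not establish and which is exactly the hard step. Kuhlmann's AKE principle gives elementary equivalence transfer, but, as you note, this alone does not suffice to push an $\inp$-pattern down to the residue field; one needs the kind of term-by-term control that a genuine relative QE provides, and in positive characteristic such a QE is not available in the literature in the form your argument requires. Until that is supplied, the ``routine'' combinatorial transfer you sketch cannot be carried out, and the conjecture remains open. Your sanity check against Corollary~\ref{cor: F_p((t)) has TP2} and your remark that the ``strong'' direction should really involve both $k$ and $\Gamma$ are reasonable observations, but they do not close the gap.
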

The following is demonstrated in \cite[Proposition 5.3]{KaScWa}.
\begin{fact}
Let $\left(K,v\right)$ be an $\NIP$ valued field of characteristic
$p>0$. Then the residue field contains $\mathbb{F}_{p}^{\alg}$ (so
in particular is infinite).
\end{fact}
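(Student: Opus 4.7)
The plan is to first pass the NIP hypothesis down to the residue field $k$, then invoke the main theorem of \cite{KaScWa} that any NIP field of characteristic $p$ has no Artin-Schreier extensions, and finally to bootstrap the resulting AS-closure (together with the valuation) to the full containment $\mathbb{F}_p^{\alg} \subseteq k$.

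For the first two steps: $k$ is interpretable in $(K,v)$ via the valuation ring and its maximal ideal, and $\NIP$ is preserved under interpretation, so $k$ is itself an $\NIP$ field of characteristic $p$. By the main theorem of \cite{KaScWa}, the Artin-Schreier map $\wp(x) = x^p - x$ is surjective on $k$. A direct proof of this surjectivity from $K$ itself goes via lifting: given $c \in \mathcal{O}_v$, any $a \in K$ with $\wp(a) = c$ (provided by applying the $\NIP$ no-AS theorem to $K$) must satisfy $v(a) \geq 0$, since $v(a)<0$ would force $v(\wp(a)) = p\,v(a) < 0 \leq v(c)$; reducing then gives $\wp(\bar a)=\bar c$. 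Iterating AS extensions over $\mathbb{F}_p$ (and then over $\mathbb{F}_{p^{p^n}}$), one concludes $\mathbb{F}_{p^{p^n}} \subseteq k$ for every $n$, i.e., the pro-$p$ part of $\mathbb{F}_p^{\alg}$ is already in $k$.

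The main obstacle, and the substantive content of \cite[Proposition 5.3]{KaScWa}, is obtaining the prime-to-$p$ cyclotomic pieces $\mathbb{F}_{p^\ell}$ with $\gcd(\ell,p)=1$, which AS-closure of $k$ does not force on its own (there exist AS-closed fields of characteristic $p$ missing such $\mathbb{F}_{p^\ell}$). Here the valued field structure is essential. The approach is: assume for contradiction that some $\mathbb{F}_{p^\ell} \not\subseteq k$, pick a missing root $\alpha$, and use lifts of $\alpha$ together with additive translates and intersections inside $\mathcal{O}_v$ to produce a uniformly definable family of subgroups of $(K,+)$ (or of a $K$-definable algebraic group) exhibiting a $\TP_2$-pattern on a singleton variable, contradicting $\NIP$ of $K$. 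The technical heart of the proof is in the construction and verification of this pattern, where the interplay between $v$, the failure of $\mathbb{F}_{p^\ell}$ to embed in $k$, and the additive/multiplicative structure of $K$ is used; this is the step that departs from the purely field-theoretic Artin-Schreier analysis of Section \ref{sec: Fields with NTP2} and is where I expect the main difficulty to lie.
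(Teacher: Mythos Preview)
The paper does not prove this Fact at all; it merely records it as \cite[Proposition~5.3]{KaScWa}. So there is no in-paper argument to compare against, only the one in \cite{KaScWa}.

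Your first three steps are correct. The residue field $k$ is interpretable in $(K,v)$, hence $\NIP$; the main theorem of \cite{KaScWa} then makes both $K$ and $k$ Artin--Schreier closed; and your valuation computation showing that any root of $x^{p}-x=c$ with $c\in\mathcal{O}_{v}$ must lie in $\mathcal{O}_{v}$ is clean. This legitimately yields $\bigcup_{n}\Ff_{p^{p^{n}}}\subseteq k$, and your observation that an AS-closed field can omit $\Ff_{p^{\ell}}$ for $\ell$ coprime to $p$ is right and shows that more input is required.

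Step~4, however, is not an argument but a placeholder. You name no concrete family of subgroups, give no mechanism linking the absence of $\Ff_{p^{\ell}}$ in $k$ to any array, and you yourself flag this as the step you do not know how to carry out. There is a sharper reason to doubt the route you propose: your target is a $\TP_{2}$-pattern. If such a construction existed, it would already show that every $\NTP_{2}$ valued field of characteristic $p$ has residue field containing $\Ff_{p}^{\alg}$, in particular infinite. But the paragraph immediately following this Fact in the present paper records precisely the question of whether the residue field is even \emph{infinite} under $\NTP_{2}$ as an open problem of Hrushovski. So either your sketch resolves that open problem or it cannot work as stated; in neither case is it the argument in \cite{KaScWa}. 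The proof there must exploit $\NIP$ --- i.e.\ full Artin--Schreier closure rather than merely finitely many Artin--Schreier extensions --- in a way that genuinely does not descend to $\NTP_{2}$, which is exactly the gap your $\TP_{2}$ strategy ignores.
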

Hrushovski asked if the following is true:
\begin{problem}
Assume that $\left(K,v\right)$ is an $\NTP_{2}$ (Henselian) valued
field of positive characteristic. Does it follow that the residue
field is infinite?
\end{problem}
We remark that the finite number of Artin-Schreier extensions alone
is not sufficient to conclude that the residue field is infinite:
\begin{example}
\label{ex:Arno}(Due to Arno Fehm) Let $\Omega=\left(\Ff_{p}\left(\left(t\right)\right)\right)^{\sep}$,
so the restriction map $\Gal\left(\Omega/\Ff_{p}\left(\left(t\right)\right)\right)\to\Gal\left(\Ff_{p}^{\alg}/\Ff_{p}\right)$
is onto. Let $\sigma\in\Gal\left(\Ff_{p}^{\alg}/\Ff_{p}\right)$ be
the Frobenius automorphism, and let $\tau\in\Gal\left(\Omega/\Ff_{p}\left(\left(t\right)\right)\right)$
be such that $\tau\upharpoonright\Ff_{p}^{\alg}=\sigma$. Let $F$
be the fixed field of $\tau$. Then $F$ has exactly one Artin-Schreier
extension (as $\Gal\left(\Omega/F\right)$ is pro-cyclic and $F$
is a regular extension of $\Ff_{p}$). Since $\Ff_{p}\left(\left(t\right)\right)$
is an Henselian valued field, its usual valuation extends uniquely
to an Henselian valuation on $F$. Since every element of $\Ff_{p}^{\alg}\backslash\Ff_{p}$
is moved by $\sigma$, one can see that the residue field must be
$\Ff_{p}$.
\end{example}

\begin{example}
(Due to the anonymous referee) Let $\Omega$ be the generalized power
series $\Ff_{p}^{\alg}\left(\left(t^{\Qq}\right)\right)$ --- the
field of formal sums $\sum a_{i}t^{i}$ with well-ordered support
where $i\in\Qq$ and $a_{i}\in\Ff_{p}^{\alg}$. This field is algebraically
closed. Let $\tau\in\Aut\left(\Omega\right)$ be the map $\sum a_{i}t^{i}\mapsto\sum a_{i}^{p}t^{i}$.
Let $F$ be the fixed field of $\tau$, so $F=\Ff_{p}\left(\left(t^{\Qq}\right)\right)$.
Then $F$ is Henselian with residue field $\Ff_{p}$ and (as in Example
\ref{ex:Arno}) has exactly one Artin-Schreier extension. 
\end{example}

\subsection{Definable envelopes}

Assume that we are given a subgroup of an $\NTP_{2}$ group. Is it
possible to find a \emph{definable} subgroup which is close to the
subgroup we started with and satisfies similar properties?
\begin{fact}

\begin{enumerate}
\item \cite{ShelahDependentCont,Ricardo} If $G$ is a group definable in
an $\NIP$ theory and $H$ is a subgroup which is abelian (nilpotent
of class $n$; normal and soluble of derived length $n$) then there
is a definable group containing $H$ which is also abelian (resp.
nilpotent of class $n$; normal and soluble of derived length $n$).
\item \cite{milliet:hal-00657716} Let $G$ be a group definable in a simple
theory and let $H$ be a subgroup of $G$.

\begin{enumerate}
\item If $H$ is nilpotent of class $n$, then there is a definable (with
parameters from $H$) nilpotent group of class at most $2n$, finitely
many translates of which cover $H$. If $H$ is in addition normal,
then there is a definable normal nilpotent group of class at most
$3n$ containing $H$.
\item If $H$ is a soluble of class $n$, then there is a definable (with
parameters from $H$) soluble group of derived length at most $2n$,
finitely many translates of which cover $H$. If $H$ is in addition
normal, then there is a definable normal soluble group of derived
length at most $3n$ containing $H$.
\end{enumerate}
\end{enumerate}
\end{fact}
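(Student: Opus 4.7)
Both parts follow the same \emph{definable envelope} paradigm: starting from a subgroup $H$ which is not itself definable, construct a canonical definable overgroup (or near-overgroup) by applying centralizer, center, and commutator operations, and exploit the chain condition available in $T$ to guarantee that the construction produces a genuinely definable object and that the relevant class or derived length is controlled.

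For part (1), the plan for the abelian case is as follows. Observe that $C_{G}(H)=\bigcap_{h\in H}C_{G}(h)$ is a priori only $\bigwedge$-definable. The Baldwin--Saxl chain condition, which holds for centralizers (and in fact for any uniformly definable family of subgroups) in any $\NIP$ theory, forces this intersection to coincide with $C_{G}(h_{1})\cap\cdots\cap C_{G}(h_{k})$ for finitely many $h_{i}\in H$; in particular $C_{G}(H)$ is definable. Then $Z(C_{G}(H))$ is a definable abelian group containing $H$: it is abelian by construction, and $H\subseteq Z(C_{G}(H))$ because each $h\in H$ lies in $C_{G}(H)$ (since $H$ is abelian) and commutes with all of $C_{G}(H)$ by the very definition of the centralizer. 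The nilpotent case iterates this along the upper central series of a suitable definable envelope of $H$, producing a chain of definable normal subgroups whose successive quotients realize the centralization steps while preserving the class $n$. The normal soluble case uses the analogous derived-subgroup recursion, again calling on Baldwin--Saxl at each stage to definabilize the iterated commutator subgroup.

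For part (2), one proceeds along similar lines but with two adjustments. First, $\NIP$'s strong chain condition is replaced by the stabilizer machinery available in simple theories: one considers a nonforking generic type supported on $H$ and takes its left/right stabilizer, obtaining a definable subgroup commensurable with $H$, which accounts for the ``finitely many translates'' clause appearing in the non-normal statements. Second, because the envelope one produces sits between $H$ and a bounded product like $HH^{-1}$, each iteration of the envelope recursion can roughly double the nilpotency class or derived length, explaining the bounds $2n$. The jump from $2n$ to $3n$ in the normal case comes from having to further close under $G$-conjugation, handled via normal cores and an $S1$-ideal argument on forking. I expect the main obstacle, were one to reconstruct these proofs from scratch, to be precisely this last bookkeeping step in Milliet's normal case: one must show that simultaneously closing under conjugation and under definable envelopes does not blow up the nilpotency class or derived length beyond the stated factor of three, which requires keeping the envelope construction compatible with normality at every stage of the recursion.
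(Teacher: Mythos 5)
This statement is quoted in the paper as a Fact from \cite{ShelahDependentCont,Ricardo} and \cite{milliet:hal-00657716} with no proof given, so there is no internal argument to compare yours with; I can only assess the sketch on its own terms, and it has a genuine gap. The central step of your part (1) fails: Baldwin--Saxl in an $\NIP$ theory says that every \emph{finite} intersection from a uniformly definable family of subgroups equals a sub-intersection of bounded size $k$; it does not say that an intersection over an \emph{infinite} index set is a finite sub-intersection. A strictly decreasing chain $H_{1}\supsetneq H_{1}\cap H_{2}\supsetneq\cdots$ is perfectly compatible with Baldwin--Saxl (each finite intersection is a single member of the family), and already in the dp-minimal group $\left(\mathbb{Z},+\right)$ the uniformly definable family $\left\{ n\mathbb{Z}\right\} _{n}$ has infinite intersection $\left\{ 0\right\} $, which is no finite sub-intersection. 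The conclusion you want, definability of $C_{G}(H)$ for arbitrary $H$, is the \emph{stable} chain condition (ICC), not an $\NIP$ phenomenon. So $C_{G}(H)$ is in general only $\bigwedge$-definable; your candidate $Z(C_{G}(H))$ does contain $H$ and is abelian, but it is not definable, and producing a genuinely definable abelian overgroup is exactly the difficulty that Shelah's and de Aldama's proofs address --- via a lemma on indiscernible sequences inside $\bigwedge$-definable subgroups, not via Baldwin--Saxl alone. The nilpotent and soluble iterations you propose inherit this gap at every stage.

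For part (2) the mechanism you describe is also not one that can be made to work as stated: $H$ is an arbitrary, typically non-definable, subgroup, so there is no ``nonforking generic type supported on $H$'' to which the stabilizer theorem could be applied. Milliet's argument instead rests on the chain condition \emph{up to finite index} for uniformly definable families in simple theories (Wagner), the resulting almost-centralizers $\widetilde{C}_{G}(H)$ (elements centralizing a finite-index subgroup of $H$), and Schlichting's theorem to replace a uniformly commensurable family of subgroups by a single invariant definable one; the constants $2n$ and $3n$ come from the bookkeeping of iterated almost-centralizers and from the extra passage to a conjugation-invariant envelope in the normal case. Your intuition about where the factors $2$ and $3$ arise is not unreasonable, but as written the proposal does not contain the key ideas of either cited proof.
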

Thus it seems very natural to make the following conjecture.
\begin{conjecture}
Let $G$ be an $\NTP_{2}$ group and assume that $H$ is a subgroup.
If $H$ is nilpotent (soluble), then there is a definable nilpotent
(resp. soluble) group finitely many translates of which cover $H$.
If $H$ is in addition normal, then there is a definable normal nilpotent
(resp. soluble) group containing $H$.
\end{conjecture}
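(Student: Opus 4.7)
I would follow the template of Shelah--de Aldama in the NIP case and of Milliet in the simple case, using Theorem~\ref{Thm:Weak-Baldwin-Saxl} as a substitute for the exact Baldwin--Saxl chain condition and the stabilizer theorem of \cite{MR2833482} (in its $\NTP_2$ incarnation over extension bases, cf.\ \cite{CheKap,CheBY}) to manufacture approximate envelopes when exact containment is too much to ask. Handle the normal case first by induction on the nilpotency class (resp.\ derived length), and then reduce the non-normal case to the normal one at the price of finitely many translates.

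\textbf{Normal abelian base case.} Suppose $H\trianglelefteq G$ is abelian. The centralizer $C_G(H)=\bigcap_{h\in H}C_G(h)$ is type-definable and $G$-normal and contains $H$. To be able to apply Theorem~\ref{Thm:Weak-Baldwin-Saxl}, which requires the subgroups in the family to be normal, I would replace $C_G(h)$ by $N_h:=C_G(h^G)$: since $H$ is normal we have $h^G\subseteq H$, so the formula defining $N_h$ is uniform in $h$, each $N_h$ is $G$-normal, and $\bigcap_{h\in H}N_h=C_G(H^G)=C_G(H)$. The theorem then furnishes finitely many $h_1,\dots,h_k\in H$ such that $N:=\bigcap_{i\le k}N_{h_i}$ is definable, $G$-normal, contains $H$, and satisfies $[N:C_G(H)]<\infty$. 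To extract a definable abelian subgroup out of $N$, I would intersect $N$ with the centralizers in $N$ of a finite set of coset representatives of $C_G(H)$ in $N$; the resulting definable normal subgroup $M$ centralizes $H$ (hence contains $H$ in its center), and $Z(M)$ provides the definable normal abelian envelope of $H$. For higher nilpotency class I would iterate the construction modulo $Z(M)$, applying the induction hypothesis to the image of $H$; the soluble case is analogous, building the envelope along the derived series and using that the commutator subgroup of a definable group is definable.

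\textbf{Non-normal case and main obstacle.} For the non-normal variant the plan is to produce a type-definable ``approximate'' subgroup commensurable with $H$ via the stabilizer theorem: pick a global type $p$ concentrating on $H$ and non-forking over a suitable extension base, take $\mathrm{Stab}(p)$, and use the $\NTP_2$ forking theory of \cite{CheKap,CheBY} to verify that $\mathrm{Stab}(p)$ is a type-definable subgroup of which finitely many translates cover $H$; then one replaces $\mathrm{Stab}(p)$ by a definable approximation and runs the normal argument inside it. The principal obstacle is twofold: first, the finite-index losses produced by Theorem~\ref{Thm:Weak-Baldwin-Saxl}, in contrast to the \emph{equality} furnished by Baldwin--Saxl in NIP, accumulate through the induction and must be absorbed, presumably by passing to connected components or commensurability classes in a way that preserves the nilpotency or solubility of the envelope; second, for the non-normal statement one does not have Theorem~\ref{thm: without normality in strong^2} available outside strong$^2$, so the stabilizer-theorem route must genuinely take over from the chain condition. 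Turning this into a proof in general $\NTP_2$---rather than strong or strong$^2$---is where I expect the heaviest technical work to lie, and it may well be that the cleanest result is first obtained under a burden hypothesis and then pushed to the full $\NTP_2$ setting.
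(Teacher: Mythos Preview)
The statement you are attempting to prove is a \emph{conjecture} in the paper, not a theorem: the paper offers no proof and explicitly presents it as an open problem motivated by the NIP result of Shelah--de~Aldama and the simple-theory result of Milliet. There is therefore no ``paper's own proof'' to compare your proposal against.

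As for the proposal itself, the outline is reasonable in spirit but contains a real gap already in the normal abelian base case. Theorem~\ref{Thm:Weak-Baldwin-Saxl} only tells you that for every \emph{finite} $C'\subseteq H$ there is a subset $C_0$ of size at most $k$ with $\bigl[\bigcap_{h\in C_0}N_h:\bigcap_{h\in C'}N_h\bigr]<\infty$; it gives no uniform bound on that index, and hence no way to conclude that the full type-definable intersection $C_G(H)=\bigcap_{h\in H}N_h$ has finite index in any finite subintersection. In the NIP setting Baldwin--Saxl yields literal equality after $k$ steps, which is what makes the compactness argument go through; here the finite-index slack can grow without bound as $C'$ increases, and your sentence ``the theorem then furnishes finitely many $h_1,\dots,h_k$ such that $[N:C_G(H)]<\infty$'' does not follow. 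You flag this later as an ``obstacle'' to be ``absorbed'', but it already blocks the very first step, not just the induction. A secondary issue: intersecting $N$ with centralizers $C_N(n_i)$ of coset representatives need not produce a subgroup normal in $G$, so the normality of the envelope is also unsecured. These are precisely the kinds of difficulties that keep the statement a conjecture.
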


\subsection{Hrushovski's stabilizer theorem}

Let $I$ be an ideal in the Boolean algebra of definable sets in a
fixed variable $x$, with parameters from the monster model (i.e.
$\emptyset\in I$;$\phi\left(x,a\right)\vdash\psi\left(x,b\right)$
and $\psi\left(x,b\right)\in I$ imply $\phi\left(x,a\right)\in I$;$\phi\left(x,a\right)\in I$
and $\psi\left(x,b\right)\in I$ imply $\phi\left(x,a\right)\lor\psi\left(x,b\right)\in I$).
An ideal $I$ is invariant over a set $A$ if $\phi\left(x,a\right)\in I$
and $a\equiv_{A}b$ implies $\phi\left(x,b\right)\in I$. An $A$-invariant
ideal is called $\Sone$ if for every sequence $\left(a_{i}\right)_{i\in\omega}$
indiscernible over $A$, $\phi\left(x,a_{0}\right)\land\phi\left(x,a_{1}\right)\in I$
implies $\phi\left(x,a_{0}\right)\in I$. A partial type $q\left(x\right)$
over $A$ is called wide (or $I$-wide) if it implies no formula in
$I$. 

In the following, $\tilde{G}$ is a subgroup of some definable group,
generated by some definable set $X$. 
\begin{fact}
\cite[Theorem 3.5]{MR2833482} Let $M$ be a model, $\mu$ an $M$-invariant
$\Sone$ ideal on definable subsets of $\widetilde{G}$, invariant
under (left or right) translations by elements of $\widetilde{G}$.
Let $q$ be a wide type over $M$ (contained in $\widetilde{G}$).
Assume:
\begin{lyxlist}{00.00.0000}
\item [{$(F)$}] There exist two realizations $a,b$ of $q$ such that
$\tp\left(b/Ma\right)$ does not fork over $M$ and $\tp\left(a/Mb\right)$
does not fork over $M$.
\end{lyxlist}
Then there is a wide type-definable over $M$ subgroup $S$ of $G$.
We have $S=\left(q^{-1}q\right)^{2}$; the set $qq^{-1}q$ is a coset
of $S$. Moreover, $S$ is normal in $\widetilde{G}$, and $S\setminus q^{-1}q$
is contained in a union of non-wide $M$-definable sets.
\end{fact}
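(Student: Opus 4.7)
The plan is to follow Hrushovski's construction from \cite{MR2833482}: guess the group as $S = (q^{-1}q)^{2}$, then verify the group axioms using the $\Sone$ property of $\mu$ together with the forking-independence provided by hypothesis $(F)$. First I would set up a wideness calculus. Using translation-invariance of $\mu$ and the $\Sone$ property, one shows that the partial type over $Ma$ saying ``$x \models q$ and $\tp(x/Ma)$ does not fork over $M$'' is wide (this is essentially $(F)$ spread by $\Sone$), and consequently $a^{-1}b$ is wide as $b$ varies over such non-forking realizations. The candidate $S$ is then defined as the partial type of elements expressible as $(a_{1}^{-1}b_{1})(a_{2}^{-1}b_{2})$ with appropriate mutual non-forking of the $a_{i}, b_{i}$ over $M$; type-definability over $M$ is immediate, and wideness follows from the wideness calculus together with $(F)$.

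The main step is to show that $S$ is closed under multiplication. Given $s_{1}, s_{2} \in S$ one writes $s_{i} = c_{i} d_{i}$ with $c_{i}, d_{i} \in q^{-1}q$ and seeks to rewrite $s_{1}s_{2}$ as $c_{1}' d_{2}'$ with $c_{1}' \in q^{-1}q$ absorbing $c_{1}$ and the ``middle'' product $d_{1}c_{2}$, and similarly on the right. The key trick is to insert a generic realization of $q$ between $d_{1}$ and $c_{2}$ using the non-forking extensions permitted by $(F)$, which rewrites $d_{1}c_{2}$ as an element of $q^{-1}q$ modulo a non-wide set; the $\Sone$ hypothesis is exactly what is needed to guarantee that the exceptional configurations where this insertion fails form a set in $\mu$, hence are irrelevant for the type of $s_{1}s_{2}$. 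Closure under inverse is immediate since $(q^{-1}q)^{-1} = q^{-1}q$ is symmetric.

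For the moreover clauses: that $qq^{-1}q$ is a single coset of $S$ follows from the same insertion-of-a-generic argument applied to two arbitrary elements of $qq^{-1}q$, whose ``difference'' lies in $(q^{-1}q)^{2} = S$ up to a non-wide set; that $S \setminus q^{-1}q$ is contained in a union of non-wide $M$-definable sets is a direct by-product of the construction, because the non-exceptional part of $S$ already lies in $q^{-1}q$; and normality of $S$ in $\widetilde{G}$ follows because conjugation by an element of the generating set $X$ is $M$-definable, preserves $\mu$ by translation-invariance, and sends the wide type-definable group $S$ to one of the same kind, hence equal to $S$. The main obstacle throughout is the multiplicative closure step: orchestrating the independence manipulations without the luxury of a well-behaved independence relation on all types, and using only $(F)$ together with $\Sone$ to absorb the exceptional non-wide sets---this is precisely the innovation of Hrushovski's stabilizer theorem over its classical stable and simple predecessors.
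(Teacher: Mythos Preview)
The paper does not prove this statement at all: it is recorded as a \emph{Fact}, with a citation to \cite[Theorem 3.5]{MR2833482}, and is used only as background for the discussion in the final subsection. So there is no ``paper's own proof'' to compare your proposal against; you are sketching Hrushovski's original argument, not anything the present authors supply.

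As a sketch of Hrushovski's proof your outline is broadly accurate in spirit --- the candidate group is indeed $(q^{-1}q)^{2}$, and the heart of the matter is the multiplicative-closure step, handled by inserting independent generics and using the $\Sone$ property to discard exceptional configurations. One point that is not right as written is your normality argument: conjugation by an arbitrary element $g$ of the generating set $X$ is definable over $Mg$, not over $M$, so you cannot conclude directly that $gSg^{-1}$ is type-definable over $M$ and hence equal to $S$. Hrushovski's actual argument for normality is more delicate and goes through the fact that $\widetilde{G}/S$ embeds into a bounded object. If you intend to present a genuine proof rather than a citation, that step needs to be reworked.
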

In \cite{CheKap} it is proved that if $M$ is a model of an $\NTP_{2}$
theory and $q\in S\left(M\right)$, then it has a global strictly
invariant extension $p\in S\left(\C\right)$ (meaning that $p$ is
an $M$-invariant type and for every $N\supseteq M$ and $a\models p|_{N}$
we have $\tp\left(N/Ma\right)$ does not fork over $M$). It thus
follows that the assumption $(F)$ is always satisfied in $\NTP_{2}$
theories. In \cite[Section 2 + discussion before Proposition 3.5]{CheBY}
it is proved that in an $\NTP_{2}$ theory, the ideal of formulas
forking over a model $M$ is $S1$. However, in general the ideal
of forking formulas is not invariant under the action of the definable
group. By \cite[Theorem 3.5, Remark (4)]{MR2833482} the assumption
of invariance under the action of $\widetilde{G}$ can be replaced
by the existence of an $f$-generic extension of $q$. It seems interesting
to find a right version of this result generalizing the theory of
stabilizers in simple theories \cite{MR1649061}.

\bibliographystyle{alpha}
\bibliography{common}

\def\cprime{$'$}
\begin{thebibliography}{K{\"u}n89b}

\bibitem[Adl07]{HansBurden}
Hans Adler.
\newblock Strong theories, burden, and weight.
\newblock {\em preprint}, 2007.

\bibitem[AK65]{ax1965diophantine}
J.~Ax and S.~Kochen.
\newblock Diophantine problems over local fields i.
\newblock {\em American Journal of Mathematics}, 87(3):605--630, 1965.

\bibitem[Ald]{Ricardo}
{Ricardo de} Aldama.
\newblock A result on definable groups without the independence property.
\newblock {\em Bulletin of Symbolic Logic, to appear}.

\bibitem[Ax65]{AxDefiningValuation}
James Ax.
\newblock On the undecidability of power series fields.
\newblock {\em Proc. Amer. Math. Soc.}, 16:846, 1965.

\bibitem[Azg10]{MR2725200}
Salih Azgin.
\newblock Valued fields with contractive automorphism and {K}aplansky fields.
\newblock {\em J. Algebra}, 324(10):2757--2785, 2010.

\bibitem[BC12]{CheBY}
Ita\"{i} {Ben Yaacov} and Artem Chernikov.
\newblock An independence theorem for {NTP2} theories.
\newblock {\em arXiv:1207.0289}, 2012.

\bibitem[B{\'e}l99]{MR1703196}
Luc B{\'e}lair.
\newblock Types dans les corps valu{\'e}s munis d'applications coefficients.
\newblock {\em Illinois J. Math.}, 43(2):410--425, 1999.

\bibitem[CH12]{CheHils}
Artem Chernikov and Martin Hils.
\newblock Valued difference fields and ${NTP_2}$.
\newblock {\em preprint}, 2012.

\bibitem[Cha99]{MR1721163}
Zo{{\'e}} Chatzidakis.
\newblock Simplicity and independence for pseudo-algebraically closed fields.
\newblock In {\em Models and computability ({L}eeds, 1997)}, volume 259 of {\em
  London Math. Soc. Lecture Note Ser.}, pages 41--61. Cambridge Univ. Press,
  Cambridge, 1999.

\bibitem[Cha08]{ZoeUnboundedPAC}
Zo{{\'e}} Chatzidakis.
\newblock Independence in (unbounded) {PAC} fields, and imaginaries.
\newblock {\em http://www.logique.jussieu.fr/~zoe/papiers/Leeds08.pdf}, 2008.

\bibitem[Che12]{Chernikov:2012uq}
Artem Chernikov.
\newblock Theories without the tree property of the second kind.
\newblock {\em arXiv:1204.0832}, 04 2012.

\bibitem[CK12]{CheKap}
Artem Chernikov and Itay Kaplan.
\newblock Forking and dividing in ${NTP_2}$ theories.
\newblock {\em J. Symbolic Logic}, 77(1):1--20, 2012.

\bibitem[DGL11]{DolichGoodrickLippel}
Alfred Dolich, John Goodrick, and David Lippel.
\newblock Dp-minimality: basic facts and examples.
\newblock {\em Notre Dame J. Form. Log.}, 52(3):267--288, 2011.

\bibitem[Efr91]{MR1133080}
Ido Efrat.
\newblock The elementary theory of free pseudo {$p$}-adically closed fields of
  finite corank.
\newblock {\em J. Symbolic Logic}, 56(2):484--496, 1991.

\bibitem[FJ05]{MR2102046}
Michael~D. Fried and Moshe Jarden.
\newblock {\em Field arithmetic}, volume~11 of {\em Ergebnisse der Mathematik
  und ihrer Grenzgebiete. 3. Folge. A Series of Modern Surveys in Mathematics
  [Results in Mathematics and Related Areas. 3rd Series. A Series of Modern
  Surveys in Mathematics]}.
\newblock Springer-Verlag, Berlin, second edition, 2005.

\bibitem[HJ88]{MR921990}
Dan Haran and Moshe Jarden.
\newblock The absolute {G}alois group of a pseudo {$p$}-adically closed field.
\newblock {\em J. Reine Angew. Math.}, 383:147--206, 1988.

\bibitem[Hru]{Hrushovski:nr}
Ehud Hrushovski.
\newblock The elementary theory of the {F}robenius automorphisms.
\newblock {\em arXiv:math/0406514}.

\bibitem[Hru12]{MR2833482}
Ehud Hrushovski.
\newblock Stable group theory and approximate subgroups.
\newblock {\em J. Amer. Math. Soc.}, 25(1):189--243, 2012.

\bibitem[Kim96]{MR2694252}
Byunghan Kim.
\newblock {\em Simple first order theories}.
\newblock ProQuest LLC, Ann Arbor, MI, 1996.
\newblock Thesis (Ph.D.)--University of Notre Dame.

\bibitem[KP11]{MR2787692}
Krzysztof Krupi{{\'n}}ski and Anand Pillay.
\newblock On stable fields and weight.
\newblock {\em J. Inst. Math. Jussieu}, 10(2):349--358, 2011.

\bibitem[KS11]{Kaplan:2011xy}
Itay Kaplan and Saharon Shelah.
\newblock Chain conditions in dependent groups.
\newblock {\em arXiv:1112.0807}, 12 2011.

\bibitem[KSW11]{KaScWa}
Itay Kaplan, Thomas Scanlon, and Frank~O. Wagner.
\newblock Artin-{S}chreier extensions in {NIP} and simple fields.
\newblock {\em Israel J. Math.}, 185:141--153, 2011.

\bibitem[K{\"u}n89a]{MR1006730}
Urs-Martin K{\"u}nzi.
\newblock Corps multiplement pseudo-{$p$}-adiquement clos.
\newblock {\em C. R. Acad. Sci. Paris S{\'e}r. I Math.}, 309(4):205--208, 1989.

\bibitem[K{\"u}n89b]{MR1087579}
Urs-Martin K{\"u}nzi.
\newblock Decidable theories of pseudo-{$p$}-adically closed fields.
\newblock {\em Algebra i Logika}, 28(6):643--669, 743, 1989.

\bibitem[Mil]{milliet:hal-00657716}
C{\'e}dric Milliet.
\newblock {Definable envelopes in groups with simple theory}.
\newblock {\em http://hal.archives-ouvertes.fr/hal-00657716/fr/}.

\bibitem[Pil98]{MR1649061}
Anand Pillay.
\newblock Definability and definable groups in simple theories.
\newblock {\em J. Symbolic Logic}, 63(3):788--796, 1998.

\bibitem[Pre81]{MR645909}
Alexander Prestel.
\newblock Pseudo real closed fields.
\newblock In {\em Set theory and model theory ({B}onn, 1979)}, volume 872 of
  {\em Lecture Notes in Math.}, pages 127--156. Springer, Berlin, 1981.

\bibitem[Pre85]{MR799048}
Alexander Prestel.
\newblock On the axiomatization of {PRC}-fields.
\newblock In {\em Methods in mathematical logic ({C}aracas, 1983)}, volume 1130
  of {\em Lecture Notes in Math.}, pages 351--359. Springer, Berlin, 1985.

\bibitem[Pre90]{MR1071779}
Alexander Prestel.
\newblock Pseudo real closed fields.
\newblock In {\em S{\'e}minaire sur les {S}tructures {A}lg{\'e}briques
  {O}rdonn{\'e}es, {V}ol.\ {I}}, volume~32 of {\em Publ. Math. Univ. Paris
  VII}, pages 33--35. Univ. Paris VII, Paris, 1990.

\bibitem[She80]{MR595012}
Saharon Shelah.
\newblock Simple unstable theories.
\newblock {\em Ann. Math. Logic}, 19(3):177--203, 1980.

\bibitem[She90]{MR1083551}
S.~Shelah.
\newblock {\em Classification theory and the number of nonisomorphic models},
  volume~92 of {\em Studies in Logic and the Foundations of Mathematics}.
\newblock North-Holland Publishing Co., Amsterdam, second edition, 1990.

\bibitem[She09]{ShelahDependentCont}
Saharon Shelah.
\newblock Dependent first order theories, continued.
\newblock {\em Israel J. Math.}, 173:1--60, 2009.

\end{thebibliography}

\end{document}